\documentclass[10pt,reqno]{amsart}
\usepackage[a4paper,top=3 cm,bottom=3 cm,left=2.5 cm,right=2.5 cm]{geometry}

\usepackage[utf8]{inputenc}  
\usepackage[T1]{fontenc}
\usepackage{pgfplots}
\usepackage{helvet}
\usepackage{graphicx}
\usepackage{xcolor}
\usepackage{amsmath,amssymb, mathtools}
\usepackage{hyperref}
\usepackage{cleveref}
\usepackage{dsfont}
\usepackage{todonotes}

\usepackage{subfig}



\DeclareMathOperator{\esssup}{ess\,sup}

\def\R{\mathbb R}
\def\N{\mathbb N}

\def\E{\mathbb E} 

\def\to{\rightarrow}
\def\eps{\varepsilon}


\newtheorem{theorem}{Theorem}[section]

\newtheorem{corollary}{Corollary}[section]
\newtheorem{lemma}{Lemma}[section]
\newtheorem{definition}{Definition}[section]

\newtheorem{remark}{Remark}[section]

\newtheorem{assumption}{Assumption}[section]


\usepackage{titletoc}

\makeatletter
\newcommand\thankssymb[1]{\textsuperscript{\@fnsymbol{#1}}}
\makeatother

\pgfplotsset{compat=1.18} 
\begin{document}
                
\title{Averaged observations and turnpike phenomenon for parameter-dependent systems}

\author{Mart\'in Hern\'andez}
\address{M. Hern\'andez, Chair for Dynamics, Control, Machine Learning, and Numerics, Alexander von Humboldt-Professorship, Department of Mathematics,  Friedrich-Alexander-Universit\"at Erlangen-N\"urnberg, 91058 Erlangen, Germany.}
\email{martin.hernandez@fau.de}

\author{Martin Lazar}
\address{M. Lazar, Department of Electrical Engineering and Computing, University of Dubrovnik, \'Cira Cari\'ca 4, 20000 Dubrovnik, Croatia.}
\email{mlazar@unidu.hr}

\author{Sebasti\'an Zamorano}
\address{S. Zamorano, Universidad de Santiago de Chile, Departamento de
Matem\'atica y Ciencia de la Computaci\'on, Facultad de Ciencia, Casilla 307-Correo 2,
Santiago, Chile.}
 \email{sebastian.zamorano@usach.cl}

\thanks{
}


\subjclass[2020]{49K15, 49N15, 49K45, 37H30.}
\keywords{Parameter dependent problem; Averaged observations; Turnpike property}

\begin{abstract} 
Our main contribution in this article is the achievement of the turnpike property in its integral and exponential forms for parameter-dependent systems with averaged observations in the cost functional. Namely, under suitable assumptions with respect to the matrices that defined the dynamics and the cost functional, we prove that the optimal control and state for the evolutionary problem converge in average to the optimal pair of an associated stationary problem. Moreover, we characterize the closeness between these two optimal solutions, proving that over a large time interval, they are exponentially close.
\end{abstract}

\maketitle

\tableofcontents

\section{Introduction}
\subsection{Main results}
Let us begin this article by presenting our main contributions. For it, let $T>0$, $n,\,m\in\N$, and let $(\Omega, \mathcal{F}, \mu)$ be a probability space. We consider the following optimal tracking control problem with averaged observations
\begin{align}\label{eq:evol_funtional}
    \min_{u\in L^2(0,T;\R^m)}\left\{J^T(u)=\frac{1}{2}\int_{0}^T\Big(\|u(t)\|_{\R^m}^2+\|\E[C(\cdot)x(t,\cdot)]-z\|^2_{\R^n}\Big) dt + \langle x(T,\cdot),\varphi_T(\cdot)\rangle_{L^2(\Omega;\R^n)}\right\},
\end{align}
subject to $x=x(t,\omega)\in\R^n$ solving the parameter-dependent evolutionary equation
\begin{align}\label{eq:evol_equation}
\begin{cases}    x_t(t,\omega)+A(\omega)x(t,\omega)=B(\omega)u(t),\quad t>0,\\
x(0,\omega)=x_0(\omega).
\end{cases}
\end{align}
Here $\omega\in\Omega$ corresponds to the random parameter, $z\in \R^n$ is a fixed target, $x_0\in L^\infty(\Omega;\R^n)$ denotes the initial condition, and $\varphi_T\in L^2(\Omega;\R^n)$ determines a linear regularization of the average of the final state. In addition, we consider the three random matrices $A,\,C\in C^0(\Omega,\mathcal{L}(\R^n))$ and $B\in C^0(\Omega,\mathcal{L}(\R^m;\R^n))$, constant in time and uniformly bounded with respect to $\omega$, in order to ensure the integrability of solutions with respect to the parameter. By $x=x(t,\omega)\in \R^n$, we denote the state of the equation, and $u=u(t)$ represents the control function, which is parameter-independent.
 
The space $L^2(\Omega;\R^n)$ is the set formed by the square-integrable functions with respect to the measure $\mu$, that is
\begin{align*}
    L^2(\Omega;\R^n):=\left\{x:\Omega\to\R^n\,\, \int_{\Omega}\|x(\omega)\|^2_{\R^n}d\mu(\omega)<\infty\right\},
\end{align*}
which is a Hilbert space endowed with inner product 
\begin{align*}
( x,y)_{L^2(\Omega; \R^n)}=\int_{\Omega}( x(\omega),y(\omega))_{\R^n}d\mu(\omega), \quad\forall x,y\in L^2(\Omega; \R^n).
\end{align*}
In a same manner, we can define the space $L^\infty(\Omega;\R^n)$ as
\begin{align*}
    L^\infty(\Omega;\R^n):=\left\{ x:\Omega\to \R^n\,:\, \esssup_{\omega\in \Omega}\|x(\omega)\|_{\R^n}<\infty\right\}.
\end{align*}
We also denote by $\E:L^2(\Omega;\R^n)\to\R^n$ the expectation operator i.e. 
\begin{align*}
    \E[x]:=\int_\Omega x(\omega) d\mu(\omega).
\end{align*}

It is immediate that $J^T$ given by \eqref{eq:evol_funtional} is a strictly convex, continuous and coercive functional. Therefore, existence and uniqueness of the optimal control for $J^T$, denoted by $u^T$, is a consequence of the direct method of calculus of variations. Let us denote by $x^T$ the optimal state, solution of \eqref{eq:evol_equation}, associated to the optimal control $u^T$. 

In this article, we are interested in the relationship between the optimal pair $(u^T,x^T)$ and the corresponding stationary optimal control. Namely, we want to analyze how $(u^T,x^T)$ is related, in an appropriate sense, to the optimal solution of the reference stationary optimal control problem
\begin{align}\label{eq:statio_functional}
    \min_{u\in \R^m}\left\{J^s(u,x)=\frac{1}{2}\left(\|u\|_{\R^m}^2+\|\E[C(\cdot)x(\cdot)]-z\|^2_{\R^n}\right)\right\},
\end{align}
subject to $x=x(\omega)\in \R^n$ be the solution of the parameter-dependent stationary equation 
\begin{align}\label{eq:statio_equation}
A(\omega)x(\omega)=B(\omega)u.
\end{align}
To establish how the two optimal solutions are related, we need to assume the following hypotheses related to the dynamics of \eqref{eq:evol_equation} and the cost functional, which are motivated by the notions of exponential stability and exponential detectability, classic notions in control and optimization theory (see, for instance, \cite{MR2273323}). 
\begin{assumption}\label{A1}
    There exists a feedback operator $K_C\in C^0(\Omega,\mathcal{L}(\R^n))$ uniformly bounded with respect to $\omega\in \Omega$ and a positive constant $\alpha_C>0$ such that
\begin{align*}
    (Av+ K_C\E[ K_C Cv],v)_{L^2(\Omega;\R^n)}\geq \alpha_C\|v\|_{L^2(\Omega;\R^n)}^2,\quad \text{for every }v\in L^2(\Omega;\R^n).
\end{align*}
\end{assumption}
\begin{assumption}\label{A2}
    There exists a feedback operator $K_B\in C^0(\Omega,\mathcal{L}(\R^m;\R^n))$ uniformly bounded with respect to $\omega\in \Omega$ and a positive constant $\alpha_B>0$ such that
\begin{align*}
    (A^*v+ K_B\E[K_B B^*v],v)_{L^2(\Omega;\R^n)}\geq \alpha_B\|v\|_{L^2(\Omega;\R^n)}^2,\quad \text{for every }v\in L^2(\Omega;\R^n).
\end{align*}
\end{assumption}
It is important to mention that, under Assumption \eqref{A1}, the existence and uniqueness of optimal solution for the minimization problem \eqref{eq:statio_functional} can be proved, cf. Theorem \ref{Th:uniq_statio_control}. Thus, we denote by $(u^s,x^s)$ this optimal solution.

Therefore, with all of these ingredients, we can state our first main result, which tells us that both the state and the evolutionary optimal control converge on average to their corresponding steady state and control. We called this result a \emph{integral turnpike property}.
\begin{theorem}\label{Th:integral_turnpike}
   Let us suppose that Assumptions \ref{A1} and \ref{A2} hold and $T>1$. Let $(u^T,x^T)$ and $(u^s,x^s)$ be the optimal pair of minimization problems \eqref{eq:evol_funtional} and \eqref{eq:statio_functional}, respetively. Then, 
   \begin{align*}
       \left\|\frac{1}{T}\int_0^T x^T(t,\cdot)dt - x^s(\cdot)\right\|_{L^2(\Omega;\R^n)}\to 0,\quad \text{ and }\quad
       \left\|\frac{1}{T}\int_0^T u^T(t)dt - u^s\right\|_{\R^m}\to 0,
   \end{align*}
as $T\to \infty$.
\end{theorem}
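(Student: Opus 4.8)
The plan is to characterize both optimal pairs through their first-order optimality systems, subtract the two, and run an energy estimate that, combined with Assumptions \ref{A1}--\ref{A2}, forces the time averages to converge. First I would write the Euler--Lagrange system for \eqref{eq:evol_funtional}. Introducing the adjoint $p^T=p^T(t,\omega)$ solving the backward equation $-p^T_t+A^*p^T=C^*(\E[Cx^T]-z)$ with $p^T(T)=\varphi_T$, and noticing that the control is parameter-independent, the stationarity condition integrates the adjoint over $\Omega$ and reads $u^T=-\E[B^*p^T]$. Hence $(x^T,p^T)$ solves the coupled system
\begin{align*}
x^T_t+Ax^T+B\,\E[B^*p^T]=0,\quad x^T(0)=x_0,\qquad -p^T_t+A^*p^T=C^*(\E[Cx^T]-z),\quad p^T(T)=\varphi_T.
\end{align*}
In the same way, the stationary optimum $(u^s,x^s)$ from Theorem \ref{Th:uniq_statio_control}, together with its adjoint $p^s$, satisfies $Ax^s+B\,\E[B^*p^s]=0$, $A^*p^s=C^*(\E[Cx^s]-z)$ and $u^s=-\E[B^*p^s]$.

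Next I would set $y=x^T-x^s$ and $q=p^T-p^s$ and subtract, obtaining the homogeneous system $y_t+Ay=-B\,\E[B^*q]$, $-q_t+A^*q=C^*\E[Cy]$, with data split between the two ends, $y(0)=x_0-x^s$ and $q(T)=\varphi_T-p^s$. Differentiating the duality pairing $(y,q)_{L^2(\Omega;\R^n)}$ in time, the contributions of $A$ and $A^*$ cancel and the averaged coupling terms become sign-definite, giving the key identity
\begin{align*}
\frac{d}{dt}(y,q)_{L^2(\Omega;\R^n)}=-\|\E[B^*q]\|^2_{\R^m}-\|\E[Cy]\|^2_{\R^n}.
\end{align*}
Integrating over $(0,T)$ produces $\int_0^T\big(\|\E[B^*q]\|^2+\|\E[Cy]\|^2\big)\,dt=(y(0),q(0))-(y(T),q(T))$, an exact bookkeeping of the dissipated ``observed'' energy in terms of boundary values.

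The core step is then to bound the right-hand side uniformly in $T$ and to convert the resulting estimates into convergence of the averages $\bar y=\frac1T\int_0^T y\,dt$ and $\bar q=\frac1T\int_0^T q\,dt$. Using a competitor obtained by correcting the stationary control with the stabilizing feedback furnished by Assumption \ref{A2}, the optimal cost obeys $J^T(u^T)=O(T)$; from this I would extract uniform bounds on $\|x^T(T)\|$ and $\|p^T(0)\|$, so that $\int_0^T(\|\E[B^*q]\|^2+\|\E[Cy]\|^2)\,dt\le C$ and $\tfrac1T\|y(T)\|,\tfrac1T\|q(0)\|\to0$. Cauchy--Schwarz then gives $\|\E[B^*\bar q]\|\le T^{-1/2}C^{1/2}\to0$ and $\|\E[C\bar y]\|\to0$; since $\bar u^T-u^s=-\E[B^*\bar q]$, the control average already converges. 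For the state, averaging the first equation yields $A\bar y=-B\,\E[B^*\bar q]-\tfrac1T(y(T)-y(0))\to0$ in $L^2(\Omega;\R^n)$, and testing against $\bar y$ while invoking the coercivity of Assumption \ref{A1}---whose injection term is slaved to $\E[C\bar y]\to0$---forces $\|\bar y\|_{L^2(\Omega;\R^n)}\to0$, i.e. $\tfrac1T\int_0^T x^T\,dt\to x^s$.

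I expect the main obstacle to be the uniform-in-$T$ a priori bounds on the endpoint values $\|x^T(T)\|$ and $\|p^T(0)\|$: the energy identity only controls the dissipated quantities $\E[B^*q]$ and $\E[Cy]$, never the full state or adjoint, so closing the estimate requires genuinely using the exponential-stability content of Assumptions \ref{A1}--\ref{A2} (through a stabilizing competitor, or through the algebraic Riccati/dichotomy structure of the underlying Hamiltonian system) to prevent the boundary terms from growing with $T$. A second, related subtlety is the final upgrade from the averaged observation $\E[C\bar y]$ to the full average $\bar y$: it works only because the detectability feedback in \ref{A1} sees the state through the same averaged observation that the energy identity dissipates, and checking this compatibility---together with the analogous role of \ref{A2} on the control side---is where the specific algebraic form of the two assumptions is essential.
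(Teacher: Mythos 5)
Your overall architecture matches the paper's: form the difference system for $(y,q)=(x^T-x^s,\varphi^T-\varphi^s)$, derive a duality identity expressing $\int_0^T\big(\|\E[Cy]\|^2+\|u^T-u^s\|^2\big)\,dt$ through endpoint pairings, pass to time averages by Cauchy--Schwarz, and use the detectability inequality \eqref{eq:ineq_statio_1} to upgrade $A\bar y\to 0$ and $\E[C\bar y]\to 0$ into $\bar y\to 0$. But there are two genuine gaps. First, you invoke the pointwise stationarity condition $u^s=-\E[B^*p^s]$ in order to write the difference system in the closed Hamiltonian form $y_t+Ay=-B\,\E[B^*q]$. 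The stationary problem is constrained ($Ax=Bu$ need not be solvable for every $u\in\R^m$), so the honest first-order condition is only the variational relation $(u^s+\E[B^*\varphi^s],v)=0$ for $v$ in the admissible set $D$, see \eqref{eq:charac_statio_oc}; the paper obtains the full identity $u^s=-\E[B^*\varphi^s]$ only in Corollary \ref{cor:characterization_us}, \emph{as a consequence of} the integral turnpike theorem. Using it inside the proof is circular unless justified independently; the paper instead keeps $B(u^T-u^s)$ in the difference system and carries the extra cross term $\int_0^T(u^T-u^s,\,u^s+\E[B^*\varphi^s])\,dt$ through the estimate.

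Second, and more seriously, the step you yourself flag as the ``main obstacle'' --- controlling the endpoint values $\|x^T(T,\cdot)\|$ and $\|\varphi^T(0,\cdot)\|$ --- is precisely the substance of the paper's proof and is not supplied by your proposal. A competitor bound does not come for free: only detectability/stabilizability, not stability, of $A$ is assumed, so the free trajectory (hence $J^T(0)$) may grow exponentially, and the feedback of Assumption \ref{A2} acts on the adjoint operator through an averaged, parameter-dependent gain, so it does not directly yield an admissible parameter-independent open-loop competitor. The paper closes this loop differently: it first derives the duality identity \eqref{eq:relation_x_phi} between the optimal state and its adjoint, then absorbs the right-hand side using Lemmas \ref{lem:inequality_state} and \ref{lem:inequality_adjoint} (which bound $\|x^T(t,\cdot)\|^2$ and $\|\varphi^T(t,\cdot)\|^2$ by time integrals of the \emph{observed} quantities $\|u^T\|^2$ and $\|\E[Cx^T]-z\|^2$ plus the data) to obtain $\int_0^T\big(\|\E[Cx^T]-z\|^2+\|u^T\|^2\big)\,dt\le KT$, and feeds this back into the same lemmas to get $\|x^T(t,\cdot)\|^2,\|\varphi^T(t,\cdot)\|^2\le KT$ for all $t$. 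Note these endpoint bounds are only $O(\sqrt T)$, not uniform in $T$ as you demand; they nonetheless suffice, since the endpoint pairings in the duality identity are then $O(\sqrt T)$ and Jensen's inequality converts $\int_0^T\|u^T-u^s\|^2\,dt=o(T)$ into vanishing of the averaged quantities. Without this bootstrap, or an equivalent substitute, your argument does not close.
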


The proof of this integral turnpike property is based on energy estimates for the optimal state and the adjoint variable given by the Euler-Lagrange equation.

Under that the optimal solutions converge in average, we analyze the possibility to better describe this convergence in the following sense: Does there exist a time interval where the optimal solutions are exponentially close?  This property is known as the exponential turnpike phenomenon, which describes how the evolutionary optimal pair is made of three arcs. The first and the last are transient short-time arcs, and the middle piece is a long-time arc that stays exponentially close to the optimal steady-state of the associated static optimal control problem. In the next theorem, which is our second main result, we state this property in our setting.
\begin{theorem}\label{Th:exponential_turnpike}
Let the assumption of Theorem \ref{Th:integral_turnpike} be in force. Then, there exist two constants $K,\delta>0$ independent of $T$ such that 
\begin{align}
    \|x^T(t,\cdot)-x^s(\cdot)\|_{L^2(\Omega;\R^n)}+\|\varphi^T(t,\cdot)-\varphi^s(\cdot)\|_{L^2(\Omega;\R^n)}+\|u^T(t)-u^s\|_{\R^m}\leq K(e^{-t\delta}+e^{-(T-t)\delta}),
\end{align}
for every $t\in [0,T]$, where $\varphi^T=\varphi^T(t,\omega)\in \R^n$ and $\varphi^s=\varphi^s(\omega)\in\R^n$ denote the adjoint variables given by the first-order optimality condition using the Lagrangian approach for \eqref{eq:evol_funtional} and \eqref{eq:statio_functional}, respectively.
\end{theorem}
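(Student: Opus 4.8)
The plan is to reduce the statement to a single decay estimate for a homogeneous forward–backward system, and then to extract a two–sided exponential rate from the dissipativity built into Assumptions \ref{A1}--\ref{A2}. First I would write the Euler--Lagrange system for each problem via the Lagrangian/adjoint approach. For the evolutionary problem the optimal pair satisfies the state equation $x^T_t+Ax^T=-B\,\E[B^*\varphi^T]$ with $x^T(0,\cdot)=x_0$, the adjoint equation $-\varphi^T_t+A^*\varphi^T=C^*(\E[Cx^T]-z)$ with $\varphi^T(T,\cdot)=\varphi_T$, and the optimality relation $u^T=-\E[B^*\varphi^T]$; the stationary problem gives the algebraic analogue for $(x^s,\varphi^s,u^s)$ with the time derivatives dropped. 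Subtracting, and setting $y:=x^T-x^s$, $\psi:=\varphi^T-\varphi^s$, the target $z$ cancels and the pair solves the homogeneous forward--backward system
\begin{align*}
 y_t+Ay=-B\,\E[B^*\psi],\quad & y(0,\cdot)=x_0-x^s,\\
 -\psi_t+A^*\psi=C^*\E[Cy],\quad & \psi(T,\cdot)=\varphi_T-\varphi^s.
\end{align*}
Because $u^T-u^s=-\E[B^*\psi]$ and $\|\E[B^*\psi]\|_{\R^m}\le C\|\psi\|_{L^2(\Omega;\R^n)}$, the entire statement reduces to the two--sided bound $\|y(t,\cdot)\|_{L^2(\Omega;\R^n)}+\|\psi(t,\cdot)\|_{L^2(\Omega;\R^n)}\le K(e^{-\delta t}+e^{-\delta(T-t)})$.

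The engine of the decay is a dissipation identity. Writing $\mathcal{M}\psi:=B\,\E[B^*\psi]$ and $\mathcal{N}y:=C^*\E[Cy]$, which are self--adjoint and positive semidefinite on $L^2(\Omega;\R^n)$, I would differentiate the cross quantity and use that the $A$-- and $A^*$--contributions cancel, obtaining
\[
 \frac{d}{dt}\,(y,\psi)_{L^2(\Omega;\R^n)}=-\|\E[B^*\psi]\|_{\R^m}^2-\|\E[Cy]\|_{\R^n}^2\le 0.
\]
This monotonicity controls the dissipated observations $\E[Cy]$ and $\E[B^*\psi]$, which however are only finite--dimensional projections of the $L^2(\Omega)$--valued unknowns. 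I would complement it with a uniform--in--$(T,t)$ a priori bound on $\|y\|_{L^2(\Omega;\R^n)}$ and $\|\psi\|_{L^2(\Omega;\R^n)}$, obtained from the same energy estimates underlying Theorem \ref{Th:integral_turnpike} and from the well--posedness of the stationary optimality system guaranteed by Assumption \ref{A1} (cf. Theorem \ref{Th:uniq_statio_control}).

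The core step converts this partial dissipation into a genuine two--sided exponential estimate, and this is where Assumptions \ref{A1} and \ref{A2} enter. They are, respectively, an exponential--detectability condition through $C$ and an exponential--stabilizability condition through $B$, in instantaneous coercive form. Together they guarantee that the stationary optimality system admits a stabilizing feedback $\bar P$ --- the solution of the associated algebraic Riccati equation, tied to the well--posedness in Theorem \ref{Th:uniq_statio_control} --- for which the closed--loop semigroup $e^{-t(A+\mathcal{M}\bar P)}$ decays like $e^{-\delta t}$ with a rate $\delta>0$ independent of $T$. The rate is produced by the Lyapunov functional $(\bar P y,y)_{L^2(\Omega;\R^n)}$, whose dissipation along the closed loop reproduces the right--hand side of the identity above (with $\psi=\bar P y$), so that the coercivity constants $\alpha_B,\alpha_C$ are converted into $\delta$. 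I would then decompose $(y,\psi)$ into a forward arc, slaved to $\bar P$ and driven by the initial mismatch $x_0-x^s$, which decays like $e^{-\delta t}$, and a backward arc, governed by the dual (adjoint) Riccati feedback and driven by the terminal mismatch $\varphi_T-\varphi^s$, which decays like $e^{-\delta(T-t)}$; superposing the two arcs gives the claimed bound, with $K$ absorbing the uniformly bounded data.

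The hard part will be the nonlocal coupling through the expectation operator $\E[\cdot]$: both the feedbacks and the observation maps take values in the finite--dimensional spaces $\R^n$ and $\R^m$, so the dissipation identity controls only the low--rank quantities $\E[Cy]$ and $\E[B^*\psi]$, whereas the exponential estimate is needed for the full $L^2(\Omega;\R^n)$--norms. Bridging this gap --- equivalently, showing that the closed--loop operator built from $\bar P$ is exponentially stable on all of $L^2(\Omega;\R^n)$ with a rate independent of $T$, and that the finite--horizon feedback stays exponentially close to $\bar P$ on the bulk of $[0,T]$ --- is exactly what Assumptions \ref{A1}--\ref{A2} are tailored to provide, and carrying it out with $T$--uniform constants is the main technical obstacle. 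Once the estimate for $(y,\psi)$ is in hand, the three terms in the statement follow at once, since $x^T-x^s=y$, $\varphi^T-\varphi^s=\psi$, and $\|u^T-u^s\|_{\R^m}=\|\E[B^*\psi]\|_{\R^m}\le C\|\psi\|_{L^2(\Omega;\R^n)}$.
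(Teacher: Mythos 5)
Your reduction to the homogeneous forward--backward system for $y=x^T-x^s$, $\psi=\varphi^T-\varphi^s$, the use of $u^T-u^s=-\E[B^*\psi]$ (which requires Corollary \ref{cor:characterization_us}), and the dissipation identity $\tfrac{d}{dt}(y,\psi)_{L^2(\Omega;\R^n)}=-\|\E[B^*\psi]\|^2_{\R^m}-\|\E[Cy]\|^2_{\R^n}$ are all correct and coincide with the opening of the paper's proof; the paper uses the integrated form of this identity together with Lemmas \ref{lem:inequality_state} and \ref{lem:inequality_adjoint} to obtain the $T$-uniform bound $\|y\|^2_{\mathcal{X}}+\|\psi\|^2_{\mathcal{X}}\leq K(\|y(0,\cdot)\|^2_{L^2(\Omega;\R^n)}+\|\psi(T,\cdot)\|^2_{L^2(\Omega;\R^n)})$ with $\mathcal{X}=C^0([0,T];L^2(\Omega;\R^n))$. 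The problem is what comes next. You propose to extract the pointwise-in-time decay from an algebraic Riccati operator $\bar P$ whose closed loop $A+B\,\E[B^*\bar P\,\cdot\,]$ is exponentially stable on all of $L^2(\Omega;\R^n)$, followed by a forward/backward arc decomposition --- and you then explicitly flag the decisive step (exponential stability of this nonlocal closed loop with a $T$-independent rate, and closeness of the finite-horizon feedback to $\bar P$) as ``the main technical obstacle'' without proving it. That step is essentially the whole theorem: Assumptions \ref{A1} and \ref{A2} are coercivity statements for $A+K_C\E[K_C C\,\cdot\,]$ and $A^*+K_B\E[K_B B^*\,\cdot\,]$ with \emph{given} feedbacks $K_C,K_B$; no Riccati theory for averaged observation/control operators of the form $B\,\E[B^*\,\cdot\,]$ is developed here or cited, and deducing the existence and stabilizing properties of $\bar P$ from these assumptions is not routine. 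The paper deliberately follows the Riccati-free route of Gr\"une, Schaller and Schiela for precisely this reason. As written, the proposal therefore has a genuine gap at its core.

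For comparison, the paper closes the argument without any Riccati object: having the $T$-uniform bound above, it writes the optimality system for $(y,\psi)$ as $\Lambda(y,\psi)^{\top}=\mathcal{Y}$ with $\|\Lambda^{-1}\|_{\mathcal{L}(\mathcal{W},\mathcal{X}^2)}$ bounded uniformly in $T$, introduces the weighted unknowns $\hat y=yf$ and $\hat\psi=\psi f$ with $f(t)=(e^{-\delta t}+e^{-\delta(T-t)})^{-1}$, observes that they satisfy the perturbed equation $(\Lambda-\delta\mathcal{P})(\hat y,\hat\psi)^{\top}=f(0)\mathcal{Y}$ with $\|\mathcal{P}\|\leq 1$, and inverts $(I-\delta\Lambda^{-1}\mathcal{P})$ by a Neumann series for $\delta$ small, uniformly in $T$. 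If you want to complete your argument, the shortest repair is to replace the Riccati construction by this perturbation step, for which the a priori estimates you already have are sufficient.
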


In this case, the proof will be based on energy estimates and ensuring that the operator corresponding to the Euler-Lagrange equations is bounded by a constant independent on the horizon time $T>1$.

The most important novelties and difficulties of this article can be described as follows:
\begin{itemize}
\item To the best of our knowledge, exponential turnpike property for cost functional as in this paper, i.e., with averaged observations, it is the first time that has been studied. Previous works consider more restrictive penalization, as in \cite{HLZ23}, where the authors studied the exponential turnpike property without the expectation and terminal constraint on the cost functional. Then, our Theorem \ref{Th:exponential_turnpike} can be seen as the first exponential turnpike result that extends the averaged controllability idea (see, for instance, \cite{MR4599945}) to optimal control.

\item In \cite{MR4533776}, an integral turnpike result was derived without imposing any assumption on the dynamics and in the cost functional. They studied the turnpike in terms of the controllability and observability of the Gramian operator. However, as we also derive an exponential turnpike, a different proof for the integral turnpike is needed in order to get that kind of result.

\item From the minimization problem for the stationary parameter-dependent problem \eqref{eq:statio_functional}, we can observe that the existence and uniqueness of optimal solution is not a direct consequence of the calculus of variations methodology. In order to establish the existence of the optimal control, from our perspective, Assumption \ref{A1} is the most adequate hypothesis.

\item Assumption \ref{A1} can be viewed as the natural generalization of the classical detectability condition (see, for instance, \cite{MR2273323}) for parameter-dependent equations and cost functional with averaged observations. From our perspective, this condition is much more natural than the one previously considered in \cite{HLZ23}. A discussion of this will be given at the end of the paper.

\item Thus, in this work we will call Assumption \ref{A2} as $(A,B)$ stabilizability on average and Assumption \ref{A1} as $(A,C)$ detectability on average. As far as we know, this is the first time when these names have appeared in the literature.

\item Without loss of generality, in this work, we have considered target function $z\in\R^n$ independent of the parameter $\omega\in\Omega$. As we will see in the proof of our main results, this assumption does not play any role in that.   
\end{itemize}

\subsection{Existing literature}
In this article, we focus our attention on the so-called integral and exponential turnpike property for parameter-dependent systems with averaged observations on the cost functional, cf. Theorem \ref{Th:integral_turnpike} and Theorem \ref{Th:exponential_turnpike}, respectively. There is a huge body of literature on the turnpike property, both the integral and the exponential. Let us mention some of the most relevant works from our perspective. The article \cite{porretta2013long} studied the phenomenon of turnpike for linear equations, both finite and infinite dimensions. The authors proved similar results as in this work, but assuming different kinds of hypotheses. For instance, in the finite-dimensional setting, they assumed control and observation conditions for the dynamical system and for the cost functional, respectively. Under the previous hypotheses, using the Riccati theory, they established the exponential turnpike property thanks to the exponential convergence of the matrix-solution of a Riccati differential equation to the corresponding algebraic one. For the integral turnpike result, the authors used in a strong manner the controllability assumption. We can mention that the proof done in \cite{porretta2013long} was the inspiration for our demonstration.

With stabilizability and detectability conditions, in \cite{grune2019sensitivity}, the authors have established a sensitivity and turnpike properties for infinite-dimensional equations. Contrary to the paper \cite{porretta2013long}, here, the exponential turnpike property was proved without the Riccati theory. Our approach follows some ideas from this article in order to obtain the exponential turnpike phenomenon. 

For parameter-dependent systems we found the work \cite{HLZ23}, where the authors showed the exponential and integral turnpike for the system as we, but with different cost functional and slightly different assumptions. Another work related to parameter-dependent systems is the article \cite{MR4533776}, in which an integral turnpike result for optimal averaged control problems was obtained without assumptions on stabilizability and detectability. 

Finally, we recommend the following survey article \cite{MR4436586} and the books \cite{zaslavski2006turnpike, zaslavski2019turnpike} and the references therein for a complete treatment of the turnpike topic.

\subsection{Organization of the paper}
The rest of the paper is organized as follows. In Section \ref{sec:2}, we state and prove some preliminary estimates for the solution of equation \eqref{eq:evol_equation} and the adjoint problem. In addition, we establish the existence and uniqueness of problem \eqref{eq:statio_functional}. Section \ref{sec:3} is devoted to the proof of Theorem \ref{Th:integral_turnpike} and Section \ref{sec:4} to the proof of Theorem \ref{Th:exponential_turnpike}. In Section \ref{sec:5}, we present a numerical example in order to validate numerically our main result concerning the exponential turnpike property. Finally, in Section \ref{sec:6}, we give a discussion about our main assumptions and present some open problems related to our work.


\section{Preliminary results}\label{sec:2}
The proof of our main results, namely Theorems \ref{Th:integral_turnpike} and \ref{Th:exponential_turnpike}, are based on a careful analysis and suitable estimates for the solution of the first-order optimality system associated to both problems \eqref{eq:evol_funtional}-\eqref{eq:evol_equation} and \eqref{eq:statio_functional}-\eqref{eq:statio_equation}.

\subsection{Time-dependent optimal control}
We start our analysis with the derivation of the optimality system for the minimization problem \eqref{eq:evol_funtional}. It is immediate that $J^T$ given by \eqref{eq:evol_funtional} is a strictly convex, continuous, and coercive functional. Therefore, existence and uniqueness of the optimal control for $J^T$, denoted by $u^T$, is a direct consequence of the direct method of calculus of variations. Let us denote by $x^T$ the optimal state associated to the optimal control $u^T$. 

Then, using the Euler-Lagrange approach, the optimal control $u^T$ can be characterized as
\begin{align}\label{eq:chara_evol_oc}
    u^T(t)=-\E[B^* \varphi^T(t,\cdot)], \quad t\in(0,T),
\end{align}
where $\varphi^T=\varphi^T(\omega,t)\in\R^n$ is the solution of the adjoint equation
\begin{align}\label{eq:evol_adjoint_equation}
    \begin{cases}
   -\varphi_t(t,\omega)+A^*(\omega)\varphi(t,\omega)= C^*(\omega)\left(\E[C(\cdot)x^T(t,\cdot)]-z\right),\quad t>0,\\
   \varphi(T,\omega)=\varphi_T(\omega),
    \end{cases}
\end{align}
with $x^T$ being the optimal solution of \eqref{eq:evol_equation} with control given by \eqref{eq:chara_evol_oc}.

Next, under the Assumption \ref{A1}, we can prove the following estimate for the solution of the equation \eqref{eq:evol_equation}.

\begin{lemma}\label{lem:inequality_state}
Let us assume that Assumption \ref{A1} is fulfilled. Then, there exists a constant $K_1>0$ independent of $t$ such that, for every $t\geq0$ and solution $x\in L^2(\Omega;\R^n)$ to \eqref{eq:evol_equation} we have
    \begin{align*}
        \|x(t,\cdot)\|_{L^2(\Omega;\R^n)}^2\leq K_1 \int_0^t\Big(\|u(s)\|^2_{\R^m}+\|\E[C(\cdot)x(s,\cdot)]\|_{\R^n}^2 \Big)ds+\|x_0(\cdot)\|^2_{L^2(\Omega;\R^n)}.
    \end{align*}
\end{lemma}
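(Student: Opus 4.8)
The plan is to run a standard energy estimate in $L^2(\Omega;\R^n)$ and to extract from Assumption \ref{A1} enough dissipation that the resulting constant does not accumulate in time. First I would test equation \eqref{eq:evol_equation} against $x(t,\cdot)$ in $L^2(\Omega;\R^n)$, i.e. integrate the scalar identity $(x_t+Ax,x)_{\R^n}=(Bu,x)_{\R^n}$ over $\Omega$ against $d\mu$. Since for each fixed $\omega$ the state solves a linear finite-dimensional ODE whose coefficients are bounded uniformly in $\omega$, the map $t\mapsto x(t,\cdot)$ is $L^2(\Omega;\R^n)$-valued and absolutely continuous, which legitimizes $(x_t,x)_{L^2(\Omega;\R^n)}=\tfrac12\frac{d}{dt}\|x(t,\cdot)\|_{L^2(\Omega;\R^n)}^2$. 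This produces the energy identity
\begin{align*}
\frac{1}{2}\frac{d}{dt}\|x(t,\cdot)\|_{L^2(\Omega;\R^n)}^2 = -(Ax,x)_{L^2(\Omega;\R^n)}+(Bu,x)_{L^2(\Omega;\R^n)}.
\end{align*}

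Next I would bring in Assumption \ref{A1}. Writing $(Ax,x)=(Ax+K_C\E[K_C Cx],x)-(K_C\E[K_C Cx],x)$ and invoking the coercivity bound, the dissipative term satisfies $-(Ax,x)\le-\alpha_C\|x\|_{L^2(\Omega;\R^n)}^2+(K_C\E[K_C Cx],x)_{L^2(\Omega;\R^n)}$. The forcing term is routine: by Cauchy–Schwarz, Young's inequality, and the uniform bound $\|B(\omega)\|\le M$ one gets $(Bu,x)_{L^2(\Omega;\R^n)}\le\tfrac{\alpha_C}{4}\|x\|_{L^2(\Omega;\R^n)}^2+\tfrac{M^2}{\alpha_C}\|u(t)\|_{\R^m}^2$. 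The delicate point is the cross term $(K_C\E[K_C Cx],x)_{L^2(\Omega;\R^n)}$, which I would split, using the uniform bounds on $K_C$ and $C$ and the fact that $\mu$ is a probability measure, into a part bounded by $\tfrac{\alpha_C}{4}\|x\|_{L^2(\Omega;\R^n)}^2$ (to be absorbed) and a constant multiple of the observation $\|\E[C(\cdot)x(t,\cdot)]\|_{\R^n}^2$. Absorbing the two $\tfrac{\alpha_C}{4}\|x\|^2$ contributions into the coercive term then yields the differential inequality
\begin{align*}
\frac{d}{dt}\|x(t,\cdot)\|_{L^2(\Omega;\R^n)}^2+\alpha_C\|x(t,\cdot)\|_{L^2(\Omega;\R^n)}^2\le \widetilde{K}\Big(\|u(t)\|_{\R^m}^2+\|\E[C(\cdot)x(t,\cdot)]\|_{\R^n}^2\Big),
\end{align*}
with $\widetilde{K}$ depending only on $\alpha_C$ and the uniform bounds of $B$, $C$ and $K_C$, but not on $t$.

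Finally I would integrate using the factor $e^{\alpha_C t}$: from $\tfrac{d}{dt}\big(e^{\alpha_C t}\|x\|^2\big)\le\widetilde{K}e^{\alpha_C t}(\|u\|^2+\|\E[Cx]\|^2)$, integrating on $[0,t]$ and using $e^{\alpha_C s}\le e^{\alpha_C t}$ for $s\le t$, then dividing by $e^{\alpha_C t}$, gives exactly the claimed estimate with $K_1=\widetilde{K}$, the initial data appearing with coefficient $e^{-\alpha_C t}\le1$. The strict positivity of $\alpha_C$ is precisely what forces the exponential prefactors to stay below $1$ and hence makes $K_1$ independent of $t$, which is the feature the statement insists on. I expect the main obstacle to be exactly the treatment of the cross term $(K_C\E[K_C Cx],x)_{L^2(\Omega;\R^n)}$: it must be arranged so that it feeds only the averaged observation $\|\E[Cx]\|_{\R^n}^2$ into the right-hand side while the residual $\|x\|^2$ part remains strictly below the available dissipation $\alpha_C$ and can be absorbed; this is where the specific averaged structure of Assumption \ref{A1} and the uniform boundedness of the feedback $K_C$ are essential.
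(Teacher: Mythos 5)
Your proposal is correct and follows essentially the same route as the paper: an $L^2(\Omega;\R^n)$ energy identity, insertion of the feedback term $(K_C\E[K_C Cx],x)$ so that Assumption \ref{A1} yields coercivity, Young's inequality to push the forcing and the cross term onto $\|u\|^2_{\R^m}$ and $\|\E[Cx]\|^2_{\R^n}$, and then integration in time. The only (immaterial) difference is at the last step: you integrate with the factor $e^{\alpha_C t}$ and bound the exponentials by $1$, whereas the paper integrates the differential inequality directly over $(0,t)$ and simply discards the nonnegative term $\int_0^t\|x(s,\cdot)\|^2_{L^2(\Omega;\R^n)}ds$ on the left.
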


\begin{proof}
    Let us multiply equation \eqref{eq:evol_equation} by its solution $x$, integrate on $\R^n$ and over $\Omega$, and adding in both sides $(K_C\E[K_{C}Cx],x)_{L^2(\Omega;\R^n)}$, we have
    \begin{multline*}
        \frac{1}{2}\frac{d}{dt}\|x(t,\cdot)\|^{2}_{L^2(\Omega;\R^n)} + (A+K_{C}\E[K_{C}Cx(t,\cdot)],x(t,\cdot))_{L^2(\Omega;\R^n)}=(B(\cdot)u(t),x(t,\cdot))_{L^2(\Omega;\R^n)}\\+(K_C(\cdot)\E[K_{C}(\cdot)C(\cdot)x(t,\cdot)],x(t,\cdot))_{L^2(\Omega;\R^n)}.
    \end{multline*}
    Applying Young's inequality with  $\varepsilon_1,\,\varepsilon_2>0$ and using Assumption \ref{A1} we have 
    \begin{align*}
        \frac{d}{dt}\|x(t,\cdot)\|^{2}_{L^2(\Omega;\R^n)} +\|x(t,\cdot)\|^2_{L^2(\Omega;\R^n)}(2\alpha_C-\varepsilon_1-\varepsilon_2)\leq\frac{1}{\eps_1}\|B\|^2\|u(t)\|_{\R^n}^2+\frac{1}{\eps_2}\|K_C\|^4\|\E[C(\cdot)x(t,\cdot)]\|^2_{\R^n}.
    \end{align*}
    Taking $\varepsilon_1,\varepsilon_2$ such that $2\alpha_C>\varepsilon_1+\varepsilon_2$ and integrating over $(0,t)$, there exists a constant $K_1>0$ such that
    \begin{align*}
     \|x(t,\cdot)\|^{2}_{L^2(\Omega;\R^n)} +\int_0^t\|x(t,\cdot)\|^2_{L^2(\Omega;\R^n)}dt\leq K_1\int_0^t\Big(\|u(s)\|_{\R^n}^2+\|\E[C(\cdot)x(s,\cdot)]\|^2_{\R^n}\Big)ds+\|x_0(\cdot)\|^{2}_{L^2(\Omega;\R^n)},
    \end{align*}
    for every $t\geq 0$. Then, using the fact that the integral on the left-hand side is positive, the proof is finished.
\end{proof}

In a similar manner, but now using Assumption \ref{A2}, the following estimate for the solution of the equation \eqref{eq:evol_adjoint_equation} can be obtained.

\begin{lemma}\label{lem:inequality_adjoint}
Suppose the Assumption \ref{A2} is satisfied. Then, there exists a constant $K_2>0$ independent of $t$ such that, for every $0\leq t\leq T$ and  $\varphi$ solution to \eqref{eq:evol_adjoint_equation}, the following holds
    \begin{align*}
\|\varphi(t,\cdot)\|_{L^2(\Omega;\R^n)}^2\leq K_2\int_t^T\Big(\|\E[B^*(\cdot)\varphi(s,\cdot)]\|^2_{\R^m}+\|\E[C(\cdot)x(s,\cdot)]-z\|_{\R^n}^2 \Big) ds+\|\varphi_T(\cdot)\|^2_{L^2(\Omega; \R^n)}.
    \end{align*}
\end{lemma}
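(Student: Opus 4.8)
The plan is to mirror the proof of Lemma \ref{lem:inequality_state}, invoking Assumption \ref{A2} in place of Assumption \ref{A1}. First I would take the inner product of the adjoint equation \eqref{eq:evol_adjoint_equation} with its own solution $\varphi$ in $\R^n$ and integrate over $\Omega$. Because the time derivative in \eqref{eq:evol_adjoint_equation} carries a minus sign, this yields
\begin{align*}
-\frac{1}{2}\frac{d}{dt}\|\varphi(t,\cdot)\|^2_{L^2(\Omega;\R^n)} + (A^*\varphi(t,\cdot),\varphi(t,\cdot))_{L^2(\Omega;\R^n)} = (C^*(\cdot)(\E[C(\cdot)x(t,\cdot)]-z),\varphi(t,\cdot))_{L^2(\Omega;\R^n)}.
\end{align*}
To bring Assumption \ref{A2} into play, I would add the feedback term $(K_B\E[K_B B^*\varphi(t,\cdot)],\varphi(t,\cdot))_{L^2(\Omega;\R^n)}$ to both sides, so that the left-hand side contains exactly the quadratic form of Assumption \ref{A2} and is therefore bounded below by $\alpha_B\|\varphi(t,\cdot)\|^2_{L^2(\Omega;\R^n)}$.

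Next I would estimate the right-hand side, which now comprises the source term and the feedback term just introduced. Applying Cauchy--Schwarz together with the uniform bounds on $B$, $C$ and $K_B$, followed by Young's inequality with parameters $\eps_1,\eps_2>0$, each product splits into a multiple of $\|\varphi(t,\cdot)\|^2_{L^2(\Omega;\R^n)}$ plus a multiple of $\|\E[B^*(\cdot)\varphi(t,\cdot)]\|^2_{\R^m}$ or of $\|\E[C(\cdot)x(t,\cdot)]-z\|^2_{\R^n}$, respectively. Collecting terms (and multiplying by $2$) produces
\begin{align*}
-\frac{d}{dt}\|\varphi(t,\cdot)\|^2_{L^2(\Omega;\R^n)} + (2\alpha_B-\eps_1-\eps_2)\|\varphi(t,\cdot)\|^2_{L^2(\Omega;\R^n)} \leq C_1\|\E[B^*(\cdot)\varphi(t,\cdot)]\|^2_{\R^m} + C_2\|\E[C(\cdot)x(t,\cdot)]-z\|^2_{\R^n},
\end{align*}
and I would choose $\eps_1,\eps_2$ small enough that $2\alpha_B>\eps_1+\eps_2$, keeping the coefficient of $\|\varphi\|^2$ nonnegative.

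The one genuinely different point from Lemma \ref{lem:inequality_state}, and the step I expect to require the most care, is the direction of integration: here the datum is prescribed at the terminal time $T$ and the time derivative appears with a minus sign. Integrating the last inequality over $(t,T)$ and using
\begin{align*}
-\int_t^T \frac{d}{ds}\|\varphi(s,\cdot)\|^2_{L^2(\Omega;\R^n)}\,ds = \|\varphi(t,\cdot)\|^2_{L^2(\Omega;\R^n)} - \|\varphi(T,\cdot)\|^2_{L^2(\Omega;\R^n)},
\end{align*}
one sees that the two sign reversals — from the minus in front of $\varphi_t$ and from integrating backward — combine so as to leave $+\|\varphi(t,\cdot)\|^2$ on the left and $+\|\varphi_T\|^2$ on the right, which is exactly the configuration the statement requires. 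Discarding the nonnegative term $(2\alpha_B-\eps_1-\eps_2)\int_t^T\|\varphi(s,\cdot)\|^2_{L^2(\Omega;\R^n)}\,ds$, setting $K_2=\max\{C_1,C_2\}$, and using the terminal condition $\varphi(T,\cdot)=\varphi_T(\cdot)$ then gives the claimed bound. Beyond verifying this compatibility of signs, I anticipate only routine bookkeeping.
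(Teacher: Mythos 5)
Your proposal is correct and follows essentially the same route as the paper: multiply the adjoint equation by $\varphi$, add the feedback term $(K_B\E[K_B B^*\varphi],\varphi)_{L^2(\Omega;\R^n)}$ to both sides so Assumption \ref{A2} gives the coercive lower bound, absorb the right-hand side via Cauchy--Schwarz and Young with $2\alpha_B>\eps_1+\eps_2$, and integrate backward over $(t,T)$. Your explicit check that the two sign reversals cancel, and your retention of the $-z$ in the source term, are both consistent with (indeed slightly more careful than) the paper's own computation.
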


\begin{proof}
     Let us multiply equation \eqref{eq:evol_adjoint_equation} by its solution $\varphi$, integrate on $\R^n$ and over $\Omega$, and adding in both sides $(K_B\E[K_{B}B^*\varphi],\varphi)_{L^2(\Omega;\R^n)}$ we have
    \begin{multline*}
        -\frac{1}{2}\frac{d}{dt}\|\varphi(t,\cdot)\|^{2}_{L^2(\Omega;\R^n)} + (A^*+K_{B}\E[K_{B}B^*\varphi(t,\cdot)],\varphi(t,\cdot))_{L^2(\Omega;\R^n)}=(C^*(\cdot)\E[C(\cdot)x(t,\cdot)],\varphi(t,\cdot))_{L^2(\Omega;\R^n)}\\+(K_B(\cdot)\E[K_{B}(\cdot)B^*(\cdot)\varphi(t,\cdot)],\varphi(t,\cdot))_{L^2(\Omega;\R^n)}.
    \end{multline*}
    Applying Young's for $\varepsilon_1,\,\varepsilon_2>0$ and using Assumption \ref{A2} we have 
    \begin{multline*}
        -\frac{d}{dt}\|\varphi(t,\cdot)\|^{2}_{L^2(\Omega;\R^n)} +\|\varphi(t,\cdot)\|^2_{L^2(\Omega;\R^n)}(2\alpha_C-\varepsilon_1-\varepsilon_2)
        \leq\frac{1}{\eps_1}\|C^*\|^2\|\E[C(\cdot)x(t,\cdot)\|_{\R^n}^2\\+\frac{1}{\eps_2}\|K_B\|^4\|\E[B^*(\cdot)\varphi(t,\cdot)]\|^2_{\R^n}.
    \end{multline*}
    Choosing $\eps_1, \eps_2$ such that $2\alpha_B>\varepsilon_1+\varepsilon_2$ and integrating over $(t,T)$ there exists a constant $K_2>0$ such that
    \begin{align*}
     \|\varphi(t,\cdot)\|^{2}_{L^2(\Omega;\R^n)} +\int_t^T\|\varphi(s,\cdot)\|^2_{\R^n}dt\leq K_2\int_t^T\Big(\|\E[B^*(\cdot)\varphi(s,\cdot)]\|_{\R^n}^2+\|\E[C(\cdot)x(s,\cdot)]\|^2_{\R^n}\Big)ds\\+\|\varphi_T(\cdot)\|^{2}_{L^2(\Omega;\R^n)},
    \end{align*}
    for every $t\leq T$. Since the second term on the left-hand side is positive, we can conclude the proof.
\end{proof}

\begin{remark} Note that as a consequence of Assumptions \ref{A1} and \ref{A2}, we can directly conclude that, using Cauchy-Schwarz inequality on the left-hand side on both assumptions, the following inequalities, respectively
\begin{align}\label{eq:ineq_statio_1}
\alpha_C\|v\|_{L^2(\Omega:\R^n)}\leq \| Av\|_{L^2(\Omega;\R^n)}+\|\E[Cv]\|_{\R^n},   
\end{align}
and
\begin{align}\label{eq:ineq_statio_2}
\alpha_B\|v\|_{L^2(\Omega:\R^n)}\leq \| A^*v\|_{L^2(\Omega;\R^n)}+\|\E[B^*v]\|_{\R^n},
\end{align}
for some constants $\alpha_C,\,\alpha_B>0$ depending only on $K_C$ and $K_B$, respectively. In particular, the first inequality will be of utmost importance for the proof of the integral turnpike.
\end{remark}

\subsection{Stationary optimal control}
The stationary control problem \eqref{eq:statio_functional} needs a slightly more delicate and detailed treatment to establish the existence and uniqueness of the optimal control.

As mentioned in the Introduction, Assumption \ref{A1} is fundamental to obtain the following result concerning the existence and uniqueness of the optimal control problem \eqref{eq:statio_functional}-\eqref{eq:statio_equation}.

\begin{theorem}\label{Th:uniq_statio_control}
Let us suppose that Assumption \ref{A1} holds. Then, there exists a unique optimal control $u^s$ and optimal state $x^s$ of \eqref{eq:statio_functional}. 
\end{theorem}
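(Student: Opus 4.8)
The plan is to apply the direct method of the calculus of variations on the set of admissible pairs
\[
\mathcal{A}=\{(u,x)\in\R^m\times L^2(\Omega;\R^n)\,:\, A(\cdot)x(\cdot)=B(\cdot)u \ \text{ in } L^2(\Omega;\R^n)\}.
\]
First I would record that $\mathcal{A}$ is nonempty, since it contains $(0,0)$, and that, being the kernel of the bounded linear map $(u,x)\mapsto Ax-Bu$ (bounded because $A,B$ are uniformly bounded in $\omega$ on the probability space $\Omega$), it is a weakly closed linear subspace of $\R^m\times L^2(\Omega;\R^n)$. I would then fix a minimizing sequence $(u_k,x_k)\subset\mathcal{A}$ for $J^s$.

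The key step, and the point where Assumption \ref{A1} is indispensable, is to show that the minimizing sequence is bounded. From $\sup_k J^s(u_k,x_k)<\infty$ one immediately gets that $\|u_k\|_{\R^m}$ and $\|\E[C(\cdot)x_k(\cdot)]-z\|_{\R^n}$, hence $\|\E[C(\cdot)x_k(\cdot)]\|_{\R^n}$, are bounded. The functional does not control $\|x_k\|_{L^2(\Omega;\R^n)}$ directly, which is exactly the difficulty flagged in the Introduction. To bypass it I would use the constraint $A x_k=B u_k$ to estimate $\|Ax_k\|_{L^2(\Omega;\R^n)}\le\|B\|_\infty\|u_k\|_{\R^m}$, and then invoke the detectability inequality \eqref{eq:ineq_statio_1},
\[
\alpha_C\|x_k\|_{L^2(\Omega;\R^n)}\leq \|Ax_k\|_{L^2(\Omega;\R^n)}+\|\E[C(\cdot)x_k(\cdot)]\|_{\R^n},
\]
which turns the two bounds above into a uniform bound on $\|x_k\|_{L^2(\Omega;\R^n)}$. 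After extracting a subsequence with $u_k\to u^s$ in $\R^m$ (finite dimension) and $x_k\rightharpoonup x^s$ weakly in $L^2(\Omega;\R^n)$, the limit pair remains in $\mathcal{A}$ by weak closedness of the linear constraint, and since $J^s$ is convex and continuous it is weakly lower semicontinuous; hence $(u^s,x^s)$ attains the infimum and existence follows.

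For uniqueness I would argue by strict convexity restricted to the subspace $\mathcal{A}$. If $(u_1,x_1)$ and $(u_2,x_2)$ are both minimizers, their midpoint lies in $\mathcal{A}$ by linearity of the constraint, and convexity of $J^s$ forces equality in the convexity inequality for each of its two separately convex terms. Strict convexity of $u\mapsto\tfrac12\|u\|_{\R^m}^2$ gives $u_1=u_2$, while strict convexity of $w\mapsto\tfrac12\|w-z\|_{\R^n}^2$ precomposed with the linear map $x\mapsto\E[C(\cdot)x(\cdot)]$ gives $\E[C(\cdot)x_1(\cdot)]=\E[C(\cdot)x_2(\cdot)]$. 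Setting $v=x_1-x_2$, the constraint together with $u_1=u_2$ yields $Av=0$, and the second equality yields $\E[Cv]=0$; inequality \eqref{eq:ineq_statio_1} then gives $\alpha_C\|v\|_{L^2(\Omega;\R^n)}\le 0$, so $x_1=x_2$.

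The main obstacle is precisely the absence of a coercive term in $x$ in $J^s$: without extra structure the control-to-state correspondence need not even be single-valued when $A(\omega)$ is singular, so neither boundedness of the state along a minimizing sequence nor uniqueness of the optimal state can be read off from coercivity and strict convexity of $J^s$ alone. Assumption \ref{A1}, through \eqref{eq:ineq_statio_1}, is exactly the detectability-type bound that simultaneously compactifies the minimizing sequence and separates distinct optimal states, and I expect verifying these two uses to be the only genuinely delicate part of the argument.
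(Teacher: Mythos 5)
Your proof is correct and follows the same overall strategy as the paper --- the direct method on the admissible set of pairs, with the detectability inequality \eqref{eq:ineq_statio_1} supplying the coercivity in the state variable that $J^s$ itself lacks --- but the two arguments diverge in their details in ways worth recording. For existence, the paper does not stop at weak convergence of the states: it runs a contradiction argument based on the strong convexity of the squared Euclidean norm to show $\E[C(\cdot)x_{n_k}(\cdot)]\to\E[C(\cdot)x(\cdot)]$, and then invokes \eqref{eq:ineq_statio_1} a second time to upgrade $x_{n_k}\rightharpoonup x$ to strong convergence in $L^2(\Omega;\R^n)$ before passing to the limit in $J^s$. Your shorter route is legitimate: since $C$ is uniformly bounded and $\mu$ is a probability measure, $x\mapsto\E[C(\cdot)x(\cdot)]$ is a bounded linear map into the finite-dimensional space $\R^n$, so weak convergence of $x_{n_k}$ already yields $\E[C(\cdot)x_{n_k}(\cdot)]\to\E[C(\cdot)x(\cdot)]$, and weak lower semicontinuity of the convex continuous functional $J^s$ suffices; the paper's longer detour buys strong convergence of the minimizing states, which is not needed for the statement. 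For uniqueness, the paper only remarks that it ``follows from the strict convexity,'' which is not literally sufficient since $J^s$ is not strictly convex in $x$ (it sees the state only through $\E[C(\cdot)x(\cdot)]$); your argument --- strict convexity forces $u_1=u_2$ and $\E[C(\cdot)x_1(\cdot)]=\E[C(\cdot)x_2(\cdot)]$, whence $v=x_1-x_2$ satisfies $Av=0$ and $\E[Cv]=0$, and \eqref{eq:ineq_statio_1} gives $v=0$ --- supplies exactly the missing justification, and makes explicit that Assumption \ref{A1} is used a second time to separate optimal states.
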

\begin{proof}
    Let us denote by 
    \begin{align*}
        \mathcal{S}:=\{(u,x)\in \R^m\times L^2(\Omega;\R^n) \,:\, A(\omega)x(\omega)=B(\omega)u\}.
    \end{align*}
    Observe that the set $\mathcal{S}$ is closed (since $A,B$ are continuous), convex, and nonempty, indeed $(0,0)\in \mathcal{S}$. Thus, $argmin(J^s)\neq \emptyset$ (see \cite[Theorem 2.19. ]{MR3310025}). Let us consider a minimization sequence $\{u_n\}_{n\geq1}$ such that 
    \begin{align}\label{eq:minimization_sec_def}
        \lim_{n\to\infty} J^s(u_n)=\min_{u\in \R^m}J^s(u).
    \end{align}
Let us denote by $\{x_n\}_{n\geq 1}$ the associated state of each $u_n$, $n\geq 1$. Then, $(u_n,x_n)\in \mathcal{S}$.

On the other hand, observe that  
    \begin{align}\label{eq:unif_bound_min_seq}
        \|u_n\|_{\R^m}^2 +\|\E[C(\cdot)x_n(\cdot)]-z\|^2_{\R^n} = 2J^s(u_n)\leq 2J^s(0)+M,
    \end{align}
    for some constant $M>0$. Consequently, there exists $u$ and a subsequence $\{u_{n_k}\}_{k\geq 1}\in \R^m$ such that $u_{n_k}\to u$ in $\R^m$, as $k\to\infty$. Moreover, since $B$ is bounded we get
    \begin{align*}
        \|B(\cdot)u_{n_k}-B(\cdot)u\|_{L^2(\Omega;\R^n)}\leq \|B\|\|u_{n_k}-u\|_{\R^m}\to 0,
    \end{align*}
as $k\to\infty$. In particular, this implies that $Ax_{n_k}=Bu_{n_k}\to Bu$ in $L^2(\Omega;\R^n)$. Let us prove that there exist $x\in L^2(\Omega;\R^n)$ such that $(x_{n_k},u_{n_k})\rightharpoonup (x,u)\in \mathcal{S}$. Since $Ax_{n_k}\to Bu$ as $k\to\infty$, then the sequence $\{A_{n_k}\}_{k\geq 1}$ is bounded. Using the inequality \eqref{eq:ineq_statio_1} and estimation $\eqref{eq:unif_bound_min_seq}$ we deduce that the sequence $\{x_{n_k}\}_{k\geq 1}$ is bounded, and there exist $x\in L^2(\Omega;\R^n)$ such that $x_{n_k}\rightharpoonup x$, as $k\to\infty$. Therefore, $(x_{n_k},u_{n_k})\rightharpoonup (x,u)$. Due to the fact that $\mathcal{S}$ is closed and convex, it is weakly convex, and we have that $(x,u)\in \mathcal{S}$, i.e., $Ax=Bu$. From the strong convergence of $\{Bu_{n_k}\}_{k\geq 1}$ in $L^2(\Omega;\R^n)$, we also deduce that $Ax_k\to Ax$ in $L^2(\Omega;\R^n)$, as $k\to\infty$.

Observe that \eqref{eq:unif_bound_min_seq} also implies that there exist some $h\in \R^n$ such that $\E[Cx_{n_k}]\to h$ in $\R^n$, as $k\to\infty$. We claim that $h=\E[Cx]$. Indeed, let us assume that $h\neq \E[Cx]$, then for $N>0$ there exists $\delta>0$ such that $\|\E[Cx_{n_k}]-\E[Cx]\|^2_{\R^n}>\delta$ for all $n_k>N$. Now, using the strong convexity of the square Euclidean norm \cite[Section 2.3]{MR3310025}, i.e., for all $v,\,w\in \R^n$ and $\lambda\in [0,1]$,
\begin{align*}
    \|\lambda v+(1-\lambda)w\|^2_{\R^n}\leq \lambda\|v\|^2_{\R^n}+(1-\lambda)\|w\|^2_{\R^n} - \lambda(1-\lambda)\|v-w\|_{\R^n}^2,
\end{align*}
we have
\begin{align*}
    J^s\left(\frac{u+u_{n_k}}{2}\right)&=\frac{1}{2}\left(\left\|\frac{u+u_{n_k}}{2}\right\|^2_{\R^m}+\left\|\E\left[C(\cdot)\left(\frac{x(\cdot)+x_{n_k}(\cdot)}{2}\right)-z\right]\right\|_{\R^n}^2\right)\\
    &\leq \frac{1}{2}\left(\frac{1}{2}\|u\|^2_{\R^m}+\frac{1}{2}\|\E[C(\cdot)x(\cdot)]-z\|^2_{\R^n} + \frac{1}{2}\|u_{n_k}\|^2_{\R^m}+\frac{1}{2}\|\E[C(\cdot)x_{n_k}(\cdot)]-z\|^2_{\R^n} \right)\\
    &\hspace{8cm} - \frac{\|\E[C(\cdot)x(\cdot)]-\E[C(\cdot)x_{n_k}(\cdot)]\|^2_{\R^n}}{4}\\
    &=\frac{1}{2}\left(J^s(u)+J^s(u_{n_k})-\frac{\|\E[C(\cdot)x(\cdot)]-\E[C(\cdot)x_{n_k}(\cdot)]\|^2_{\R^n}}{2}\right)\\
    &\leq \frac{1}{2}\left(\liminf_{k\to \infty} \{J^s(u_{n_k})\}  + J^s(u_{n_k})-\frac{\|\E[C(\cdot)x(\cdot)]-\E[C(\cdot)x_{n_k}(\cdot)]\|^2_{\R^n}}{2}\right)\\
    &= \frac{1}{2}\left(\inf_{u\in\R^m}\{ J^s(u)\} + J^s(u_{n_k})-\frac{\|\E[C(\cdot)x(\cdot)]-\E[C(\cdot)x_{n_k}(\cdot)]\|^2_{\R^n}}{2}\right),
\end{align*}
where we have used the lower semicontinuity of the functional $J$ and \eqref{eq:minimization_sec_def}. Taking $k\to\infty$ and using that $\|\E[Cx_{n_k}]-\E[Cx]\|^2_{\R^n}>\delta$ for all $n_k>N$, we deduce that 
\begin{align*}
    \lim_{n\to \infty} J^s\left(\frac{u+u_{n_k}}{2}\right)< \inf_{u\in \R^m} J^s(u),
\end{align*}
contradicting the definition of infimum. Therefore $\E[Cx_{n_k}]\to \E[Cx]$ in $\R^n$ as $k\to \infty$. Then, using the stationary inequality \eqref{eq:ineq_statio_1} we have
\begin{align*}
    \|x_{n_k}-x\|_{L^2(\Omega;\R^n)}\leq K(\|A(\cdot)x_{n_k}(\cdot)-A(\cdot)x(\cdot)\|_{L^2(\omega;\R^n)}+\|\E[C(\cdot)x_{n_k}(\cdot)]-\E[C(\cdot)x(\cdot)]\|_{\R^m}).
\end{align*}
Thus, we have that $x_{n_k}\to x$ in $L^2(\Omega;\R^n)$, as $k\to\infty$. Finally, using again lower semicontinuity of $J^s$ we have that 
\begin{align*}
    J(u)\leq \liminf_{k\to\infty} J(u_{n_k})= \inf_{u\in \R^m} J^s(u).
\end{align*}
Concluding that $u$ minimize $J^s$. The uniqueness follows from the strict convexity.
\end{proof}

Since under Assumption \ref{A1}, the control has already been uniquely determined, we conclude this section with the characterization of the control.  By applying again the Lagrangian approach, we have that the optimal control of the stationary system denoted by $u^s$ can be characterized via the variational relation.
\begin{align}\label{eq:charac_statio_oc}
(u^s+\E[B^*\varphi^s],v)_{L^2(\Omega)}=0,\quad \text{for every } v\in D.
\end{align}
where $D$ is the set defined as 
\begin{align*}
    D:=\left\{ v\in \R^m\,:\, B(\omega)v \in Rank(A(\omega)) \text{ a.e. } \omega \in \Omega\right\},
\end{align*}
and $\varphi^s$ is the solution of 
\begin{align}\label{eq:stationary_adjoint_equation}
    A^*(\omega) v= C^*(\omega)\left(\E[C(\cdot)x^s(\cdot)]-z\right).
\end{align}
where $x^s$ denotes the solution of the equation \eqref{eq:statio_equation} with control given by \eqref{eq:charac_statio_oc}.


\section{Integral turnpike property}\label{sec:3}

In this section, the proof of Theorem \ref{Th:integral_turnpike} will be provided. First of all, let us recall the optimality systems for the time-dependent and the stationary problems. Let $(u^T,x^T)$ and $(u^s,x^s)$ the optimal pairs of optimal control problems \eqref{eq:evol_funtional} and \eqref{eq:statio_functional}, respectively. Then, from the previous section, there exists a pair $(\varphi^T,\varphi^s)$ such that
\begin{align}\label{charact-controls}
\begin{cases}
u^T(t)=-\E[B^* \varphi^T(t,\cdot)], \quad t\in(0,T),\\
(u^s+\E[B^*\varphi^s],v)_{L^2(\Omega)}=0,\quad \text{for every } v\in D,
\end{cases}
\end{align}
and we have the following optimality systems for a.e. $\omega\in\Omega$
\begin{align}\label{opt-evol-syst}
\begin{cases}
x_t^T(t,\omega)+A(\omega)x^T(t,\omega)=B(\omega)u^T(t),\quad t>0,\\
 -\varphi_t^T(t,\omega)+A^*(\omega)\varphi^T(t,\omega)= C^*(\omega)\left(\E[C(\cdot)x^T(t,\cdot)]-z\right),\quad t>0,\\
 x^T(0,\omega)=x_0(\omega),\\
  \varphi^T(T,\omega)=\varphi_T(\omega),
\end{cases}
\end{align}
and
\begin{align}\label{opt-stat-syst}
\begin{cases}
A(\omega)x^s(\omega)=B(\omega)u^s,\\
A^*(\omega) \varphi^s(\omega)= C^*(\omega)\left(\E[C(\cdot)x^s(\cdot)]-z\right).
\end{cases}
\end{align}

\begin{proof}[{\bf Proof of Theorem \ref{Th:integral_turnpike}}]
Let us observe that multiplying by $\varphi^T$ the first equation of \eqref{opt-evol-syst} driven with the optimal control $u^T$, and integrating by part over $(0,T)$ we obtain
\begin{multline*}
    (\varphi_T,x^T(T,\omega))_{\R^n}-(\varphi^T(0,\omega),x_0)_{\R^n}+\int_0^T(-\varphi_t^T(t,\omega)+A^*(\omega)\varphi^T(t,\omega),x^{T}(t,\omega))_{\R^n}dt\\   =\int_0^T(B(\omega)u^T(t),\varphi^T(t,\omega))_{\R^n}dt.
\end{multline*}
Thus, using the second equation of \eqref{opt-evol-syst} we deduce
\begin{multline*}
    (\varphi_T,x^T(T,\omega))_{\R^n}-(\varphi^T(0,\omega),x_0)_{\R^n}+\int_0^T(\E[C(\cdot)x^T(t,\cdot)]-z,C(\omega)x^{T}(t,\omega)-z)_{\R^n}dt\\
    +\int_0^T(\E[C(\cdot)x^T(t,\cdot)]-z,z)_{\R^n}dt
    =\int_0^T(B(\omega)u^T(t),\varphi^T(t,\omega))_{\R^n}dt.
\end{multline*}
Now, taking expectation and using the control characterization in \eqref{charact-controls}, we get
\begin{multline}\label{eq:relation_x_phi}
    \int_{0}^T\left(\|\E[C(\cdot)x^{T}(t,\cdot)-z]\|^2_{\R^n}+\|u^T(t)\|_{\R^m}^2 \right)dt =  (\varphi^T(0,\cdot),x_0(\cdot))_{L^2(\Omega;\R^n)}-(\varphi_T(\cdot),x(T,\cdot))_{L^2(\Omega;\R^n}\\-\int_{0}^T(\E[C(\cdot)x^T(t,\cdot)]-z,z)_{\R^n}dt.
\end{multline}
By applying Cauchy-Schwarz and Young inequalities at the right-hand side of \eqref{eq:relation_x_phi}, for every $\varepsilon_1,\,\varepsilon_2,\,\varepsilon_3>0$, we have
\begin{multline*}
    (\varphi^T(0,\cdot),x_0(\cdot))_{L^2(\Omega;\R^n)}+(\varphi_T(\cdot),x^T(T,\cdot))_{L^2(\Omega;\R^n)}+\int_{0}^T(\E[C(\cdot)x^T(t,\cdot)]-z,z)_{\R^n}dt\\\leq \frac{\varepsilon_1\|\varphi^T(0,\cdot)\|^2_{L^2(\Omega;\R^n)}}{2}+\frac{\|x_0(\cdot)\|^2_{L^2(\Omega;,\R^n)}}{2\varepsilon_1}+\frac{\varepsilon_2\|x^T(T,\cdot)\|^2_{L^2(\Omega;\R^n)}}{2}+\frac{\|\varphi_T(\cdot)\|^2_{L^2(\Omega;,\R^n)}}{2\varepsilon_2} \\+ \frac{\varepsilon_3}{2}\int_0^T\|\E[C(\cdot)x^{T}(t,\cdot)-z]\|^2_{\R^n}dt +T\frac{\|z\|_{\R^n}^2}{2\varepsilon_3}.
\end{multline*}
Thus, using Lemmas \ref{lem:inequality_state} and \ref{lem:inequality_adjoint}, we deduce
\begin{multline}\label{eq:estimation_initial_data}
    (\varphi^T(0,\cdot),x_0(\cdot))_{L^2(\Omega;\R^n)}+(\varphi_T(\cdot),x^T(T,\cdot))_{L^2(\Omega;\R^n)}+\int_{0}^T(\E[C(\cdot)x^T(t,\cdot)]-z,z)_{\R^n}dt\\
    \leq 
    \left(\frac{K_2\varepsilon_1}{2}+\frac{K_1\varepsilon_2}{2}+\frac{\varepsilon_3}{2}\right)\int_0^T\|\E[C(\cdot)x^{T}(t,\cdot)-z]\|^2_{\R^n}dt+\left( \frac{K_2\varepsilon_1}{2}+\frac{K_1\varepsilon_2}{2}\right)\int_0^T\|u^T(t)\|^2_{\R^m} dt \\+ \left(\frac{1}{2\varepsilon_3}\right)T\|z\|_{\R^n}^2+\frac{\|x_0(\cdot)\|^2_{L^2(\Omega;\R^n)}}{2\varepsilon_1}+\frac{\|\varphi_T(\cdot)\|^2_{L^2(\Omega;\R^n)}}{2\varepsilon_2}.
\end{multline}
Substituting estimation \eqref{eq:estimation_initial_data} into estimation \eqref{eq:relation_x_phi} yields
\begin{multline*}
     \left(1-\frac{K_2\varepsilon_1}{2}-\frac{K_1\varepsilon_2}{2}-\frac{\varepsilon_3}{2}\right)\int_0^T\|\E[C(\cdot)x^{T}(t,\cdot)-z]\|^2_{\R^n}dt+\left(1- \frac{K_2\varepsilon_1}{2}-\frac{K_1\varepsilon_2}{2}\right)\int_0^T\|u^T(t)\|^2_{\R^m} dt\\ \leq \left(\frac{K_1\varepsilon_2}{2}+\frac{1}{2\varepsilon_3}\right)T\|z\|_{\R^n}^2+\frac{\|x_0(\cdot)\|^2_{L^2(\Omega;\R^n)}}{2\varepsilon_1}+\frac{\|\varphi_T(\cdot)\|^2_{L^2(\Omega;\R^n)}}{2\varepsilon_2}.
\end{multline*}
Choosing $\varepsilon_1,\,\varepsilon_2,\,\varepsilon_3$ in order to satisfy $2>K_2\varepsilon_1+K_1\varepsilon_2+\varepsilon_3$ we deduce the existence of a constant  $K>0$ independent of $T>1$ such that 
\begin{align}\label{eq:estimation_CX_U}
   \int_0^T\|\E[C(\cdot)x^{T}(t,\cdot)-z]\|^2_{\R^n}dt+\int_0^T\|u^T(t)\|^2_{\R^m} dt\leq K T.
\end{align}
Therefore, due to Lemmas \ref{lem:inequality_state} and \ref{lem:inequality_adjoint} we conclude that
\begin{align}\label{eq:estimation_norm_XT_phiT}
    \|x^T(t,\cdot)\|^2_{L^2(\Omega;\R^n)}\leq KT,\quad\|\varphi^T(t,\cdot)\|^2_{L^2(\Omega;,\R^n)}\leq KT,
\end{align}
for every $t\in[0,T]$. 

Now, the following two terms
\begin{align*}
    \frac{1}{T}\int_0^T x^T(t,\cdot) dt,\quad  \frac{1}{T}\int_0^T u^T(t) dt,
\end{align*}
are uniformly bounded with respect to $T$ in $L^2(\Omega;\R^n)$ and $\R^m$, respectively. Indeed, integrating the first equation of \eqref{opt-evol-syst} over $(0,T)$, multiplying by $1/T$ and taking $L^2(\Omega; \R^n)$ norm, we have
\begin{align}
\label{eq:estimation_norm_Ax}
    \nonumber\left\|\frac{1}{T}\int_0^TA(\cdot)x^{T}(t,\cdot)dt\right\|_{L^2(\Omega;\R^n)}&\leq \left\|\frac{1}{T}\int_0^T B(\cdot)u^T(t)dt\right\|_{L^2(\Omega;\R^n)}+\left\|\frac{x^T(T,\cdot)-x_0(\cdot)}{T}\right\|_{L^2(\Omega;\R^n)}\\
    &\leq  \frac{\|B\|}{T^{1/2}}\left(\int_0^T \|u^T(t)\|_{\R^m}^2dt\right)^{1/2}+\frac{\|x^T(T,\cdot)\|_{L^2(\Omega;\R^n)}+\|x_0(\cdot)\|_{L^2(\Omega;\R^n)}}{T}\\
    &\nonumber\leq  K^{1/2}\|B\|_{\infty}+\frac{K^{1/2}}{T^{1/2}}+\frac{\|x_0(\cdot)\|_{L^2(\Omega;\R^n)}}{T},
\end{align}
where we have applied Holder's inequality and estimations \eqref{eq:estimation_CX_U} and \eqref{eq:estimation_norm_XT_phiT}. Thus, $\frac{1}{T}\int_0^TA(\cdot)x^{T}(t,\cdot)dt$ is uniformly bounded with respect to $T>T_{min}$. Then, using the stationary inequality \eqref{eq:ineq_statio_1}, the following is obtained
\begin{align}\label{eq:uniform_estimation_x}
   \nonumber\alpha_C \left\|\frac{1}{T}\int_0^Tx^{T}(t,\cdot)dt\right\|_{L^2(\Omega;\R^n)}^2&\leq \left\|\frac{1}{T}\int_0^TA(\cdot)x^{T}(t,\cdot)dt\right\|^2_{L^2(\Omega;\R^n)}+\left\|\frac{1}{T}\int_0^T\E[Cx^{T}(t,\cdot)]dt\right\|_{\R^n}^2\\
    &\leq  \left\|\frac{1}{T}\int_0^TA(\cdot)x^{T}(t,\cdot)dt\right\|^2_{L^2(\Omega;\R^n)}+\frac{1}{T^2}\int_0^T\|\E[Cx^{T}(t,\cdot)]\|^2_{\R^n} dt.
\end{align}
Due to the estimates \eqref{eq:estimation_CX_U} and \eqref{eq:estimation_norm_Ax} we can conclude that $\frac{1}{T}\int_0^Tx^{T}(t,\cdot)dt$ is uniform bounded in $L^2(\Omega;\R^n)$ with respect to $T>1$. On the other hand, let us observe that
\begin{align*}
    \left\|\frac{1}{T}\int_0^Tu^T(t)dt\right\|_{\R^m}\leq \frac{1}{T^{1/2}}\left(\int_0^T\|u^T(t)\|^2dt\right)^{1/2}\leq \sqrt K.
\end{align*}
And the proof of the claim is finished.

Let us introduce the new variables $y=x^T-x^s$ and $\phi=\varphi^T-\varphi^s$ that solve the system
\begin{align}\label{eq:system_equation_y_phi}
    \begin{cases}
        y_t(t,\omega)+A(\omega)y(t,\omega)=B(\omega)(u^T(t)-u^s),\quad &t\in(0,T),\\
        -\phi_t(t,\omega)+A^*(\omega)\phi(t,\omega) = C^*(\omega)\E[C(\cdot)y(t,\cdot)],&t\in(0,T),\\
        y(0,\omega)=x_0(\omega)-x^s(\omega),\quad \phi(0,\omega)=\varphi_T-\varphi^s(\omega).
    \end{cases}
\end{align}
Then, multiplying the first two equations of \eqref{eq:system_equation_y_phi} by $\phi$ and $y $, respectivley, integrating over $(0,T)$ and $\Omega$,  we obtain
\begin{multline*}
    \int_0^T\|\E[C(\cdot)y(t,\cdot)]\|^2_{\R^n}+\|u^T(t)-u^s\|^2_{\R^m}dt=(\phi(0,\cdot),y(0,\cdot))_{L^2(\Omega;\R^n)}-(\phi(T,\cdot),y(T,\cdot))_{L^2(\Omega;\R^n)}\\
    -\int_0^T\left(u^T(t)-u^s,u^s+\E[B^*(\cdot)\varphi^s(\cdot)]\right)_{\R^m}dt.
\end{multline*}
Applying Cauchy Schwarz's and Young's inequality at the right-hand-side we get
\begin{multline}\label{eq:estima_cy_u_us}
\frac{1}{2}\int_0^T\|\E[C(\cdot)y(t,\cdot)]\|^2_{\R^n}+\|u^T(t)-u^s\|^2_{\R^m}dt\leq(\phi(0,\cdot),y(0,\cdot))_{L^2(\Omega;\R^n)}\\
-(\phi(T,\cdot),y(T,\cdot))_{L^2(\Omega;\R^n)}
    +\frac{T}{2}\|u^s+\E[B^*(\cdot)\varphi^s(\cdot)]\|_{\R^m}^2.
\end{multline}
Now, using estimations \eqref{eq:estimation_norm_XT_phiT}, the following is obtained
\begin{multline}\label{eq:estima_phi0_y0}
(\phi(0,\cdot),y(0,\cdot))_{L^2(\Omega;\R^n)}+(\phi(T,\cdot),y(T,\cdot))_{L^2(\Omega;\R^n)}\\
\leq \|\phi(0,\cdot)\|_{L^2(\Omega;\R^n)}\|y(0,\cdot)\|_{L^2(\Omega;\R^n)} + \|\phi(T,\cdot)\|_{L^2(\Omega;\R^n)}\|y(T,\cdot)\|_{L^2(\Omega;\R^n)}\\
\leq ((KT)^{1/2}+2\|\varphi^s(\cdot)\|_{L^2(\Omega;\R^n)})(\|x_0(\cdot)-x^s(\cdot)\|_{L^2(\Omega;\R^n)}) \\+ (KT)
^{1/2}+2\|x^s(\cdot)\|_{L^2(\Omega;\R^n)})\|\varphi_T(\cdot)-\varphi^s(\cdot)\|_{L^2(\Omega;\R^n)}.
\end{multline}
Thus, due to \eqref{eq:estima_cy_u_us} and \eqref{eq:estima_phi0_y0}, there exist $K>0$ such that
\begin{align*}
    \frac{1}{T}\int_0^T\|\E[C(\cdot)y(t,\cdot)]\|^2_{\R^n}+\|u^T(t)-u^s\|^2_{\R^m}dt\leq K.
\end{align*}
In particular, we conclude that
\begin{align}
\label{int_turn_con}
    \left\|\frac{1}{T}\int_0^T\E[C(\cdot)y(t,\cdot)]dt\right\|^2_{\R^n}+\left\|\frac{1}{T}\int_0^T(u^T(t)-u^s)dt\right\|^2_{\R^m}\to0,
\end{align}
as $T\to \infty$. Concluding the integral turnpike property for the control $u^T$. 
Integrating the first equation in \eqref{opt-evol-syst} once more, subtracting the $A(\omega)x^s(\omega)$ term from it, and taking $L^2(\Omega;\R^n)$ norm, we have
\begin{multline}
    \label{int_turn_y}\left\|A(\cdot)\left(\frac{1}{T}\int_{0}^T x^T(t,\cdot)dt -x^s(\cdot) \right) \right\|_{L^2(\Omega;\R^n)}^2\leq 2\left\|B(\cdot)\left(\frac{1}{T}\int_{0}^T(u^T(t)-u^s)dt \right)\right\|_{L^2(\Omega;\R^n)}^2\\
    +2\left\|\frac{x^T(T,\cdot)-x_0(\cdot)}{T}\right\|_{L^2(\Omega;\R^n)}^2 \to 0,
\end{multline}
as $T\to \infty$, due to \eqref{eq:estimation_norm_XT_phiT} and the integral turnpike property for the control \eqref{int_turn_con}. Finally, using the stationary inequality \eqref{eq:ineq_statio_1} we conclude that 
\begin{multline*}
    \left\|\frac{1}{T}\int_{0}^T x^T(t,\cdot)dt -x^s(\cdot)  \right\|_{L^2(\Omega;\R^n)}^2\leq 2 \left\|A(\cdot)\left(\frac{1}{T}\int_{0}^T x^T(t,\cdot)dt -x^s(\cdot) \right) \right\|_{L^2(\Omega;\R^n)}^2\\
    + 2\left\|\frac{1}{T}\int_{0}^T \E[C(x^T(t,\cdot) -x^s)]dt \right\|_{\R^n}^2.
\end{multline*}
Taking into account \eqref{int_turn_con} and \eqref{int_turn_y}, we conclude the integral turnpike for $x^{T}$ and the proof is finished.
\end{proof}

As a consequence of the previous result, the characterization of the stationary optimal control can improved, which is given by the next corollary.
\begin{corollary}\label{cor:characterization_us}
    Let us suppose that Assumptions \ref{A1} and \ref{A2} hold. Then, the stationary optimal control can be characterized by 
    \begin{align*}
        u^s=-\E[B^*(\cdot)\varphi^s(\cdot)].
    \end{align*}
\end{corollary}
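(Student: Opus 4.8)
The plan is to obtain the identity by passing to the time-average limit in the optimality system \eqref{opt-evol-syst} and exploiting the stabilizability inequality \eqref{eq:ineq_statio_2}. Write $\bar\varphi^T:=\frac1T\int_0^T\varphi^T(t,\cdot)\,dt$ and $\bar x^T:=\frac1T\int_0^T x^T(t,\cdot)\,dt$. Averaging the control characterization $u^T(t)=-\E[B^*\varphi^T(t,\cdot)]$ gives $\frac1T\int_0^Tu^T(t)\,dt=-\E[B^*\bar\varphi^T]$, and by Theorem \ref{Th:integral_turnpike} the left-hand side converges to $u^s$; hence $\E[B^*\bar\varphi^T]\to -u^s$ in $\R^m$.

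First I would integrate the adjoint equation in \eqref{opt-evol-syst} over $(0,T)$ and divide by $T$, which yields $A^*\bar\varphi^T = C^*(\E[C\bar x^T]-z)-\tfrac1T(\varphi^T(0,\cdot)-\varphi^T(T,\cdot))$. The boundary term tends to $0$ in $L^2(\Omega;\R^n)$ since $\|\varphi^T(0,\cdot)\|_{L^2(\Omega;\R^n)},\|\varphi^T(T,\cdot)\|_{L^2(\Omega;\R^n)}\le (KT)^{1/2}$ by \eqref{eq:estimation_norm_XT_phiT}, while $\E[C\bar x^T]\to\E[Cx^s]$ because $\bar x^T\to x^s$ by Theorem \ref{Th:integral_turnpike} and $C$ is bounded. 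Therefore $A^*\bar\varphi^T\to C^*(\E[Cx^s]-z)$ in $L^2(\Omega;\R^n)$, that is, the right-hand side of \eqref{eq:stationary_adjoint_equation}.

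Next I would turn these two convergences into convergence of $\bar\varphi^T$ itself by applying \eqref{eq:ineq_statio_2} to differences: for $T,T'>1$,
\[
\alpha_B\|\bar\varphi^T-\bar\varphi^{T'}\|_{L^2(\Omega;\R^n)}\le \|A^*(\bar\varphi^T-\bar\varphi^{T'})\|_{L^2(\Omega;\R^n)}+\|\E[B^*(\bar\varphi^T-\bar\varphi^{T'})]\|_{\R^m}.
\]
Since $A^*\bar\varphi^T$ and $\E[B^*\bar\varphi^T]$ both converge they are Cauchy, so $\{\bar\varphi^T\}$ is Cauchy in the complete space $L^2(\Omega;\R^n)$; denote its limit by $\varphi^s$. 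Passing to the limit gives simultaneously $A^*\varphi^s=C^*(\E[Cx^s]-z)$ and $\E[B^*\varphi^s]=-u^s$, which is precisely $u^s=-\E[B^*\varphi^s]$. That this limit is the adjoint of the optimality system \eqref{opt-stat-syst} follows once more from \eqref{eq:ineq_statio_2}: any two solutions of \eqref{eq:stationary_adjoint_equation} satisfying $\E[B^*\varphi]=-u^s$ differ by some $\eta$ with $A^*\eta=0$ and $\E[B^*\eta]=0$, forcing $\alpha_B\|\eta\|_{L^2(\Omega;\R^n)}\le 0$, i.e.\ $\eta=0$.

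The delicate point is exactly this last passage to the limit. The averaged adjoint is a priori only known to grow like $(KT)^{1/2}$, and the naive comparison of $\bar\varphi^T$ with a fixed adjoint through \eqref{eq:ineq_statio_2} is circular, because the inequality reintroduces the very quantity $\E[B^*(\cdot)]$ one wants to estimate. The resolution, and the heart of the argument, is to invoke $(A,B)$-stabilizability on average (Assumption \ref{A2}) on the differences $\bar\varphi^T-\bar\varphi^{T'}$ rather than on $\bar\varphi^T-\varphi^s$, converting it into a genuine Cauchy estimate whose limit automatically realizes the clean characterization.
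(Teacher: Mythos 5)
Your proof is correct and rests on the same two pillars as the paper's --- averaging the adjoint equation and the control characterization over $(0,T)$, using Theorem \ref{Th:integral_turnpike} and the bound \eqref{eq:estimation_norm_XT_phiT} to make the boundary terms vanish --- but the limiting step is handled differently. The paper only establishes a uniform $L^2(\Omega;\R^n)$ bound on $\bar\varphi^T=\frac1T\int_0^T\varphi^T(t,\cdot)dt$, extracts a \emph{weak} limit $\varphi_l$, shows that $A^*\bar\varphi^T$ converges to the right-hand side of \eqref{eq:stationary_adjoint_equation}, and then closes the argument by testing $u^s+\E[B^*(\cdot)\varphi^s(\cdot)]$ against an arbitrary $\zeta\in\R^m$ and letting $T\to\infty$ term by term. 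You instead apply the stabilizability inequality \eqref{eq:ineq_statio_2} to the differences $\bar\varphi^T-\bar\varphi^{T'}$, turning the convergence of $A^*\bar\varphi^T$ and of $\E[B^*\bar\varphi^T]=-\frac1T\int_0^Tu^T(t)dt$ into a Cauchy estimate and hence \emph{strong} convergence of $\bar\varphi^T$; the identity $u^s=-\E[B^*(\cdot)\varphi^s(\cdot)]$ then drops out by passing to the limit in both relations at once. Your route avoids weak compactness and subsequence extraction altogether, and it delivers the uniqueness of the stationary adjoint (which the paper defers to a separate remark after the corollary) for free from the same inequality. One caveat, shared with the paper's own proof: both arguments literally produce \emph{a} solution of the adjoint equation in \eqref{opt-stat-syst} satisfying the clean identity, whereas the $\varphi^s$ introduced earlier was a priori pinned down only by the weaker variational relation \eqref{eq:charac_statio_oc} on the set $D$; this is harmless for the statement as intended, but deserves a sentence if you want to be precise about which $\varphi^s$ the corollary refers to.
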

\begin{proof}
    By Theorem \ref{Th:integral_turnpike} we know that the temporal average of $x^T$ and $u^T$ converge to the stationary variables $x^s$ and $u^s$ in $L^2(\Omega; \R^n)$ and $\R^m$, respectively. Then, applying the same steps used to bound $\frac{1}{T}\int_0^T x^T(t,\cdot)dt$ in $L^2(\Omega;\R^n)$ (see \eqref{eq:estimation_norm_Ax}-\eqref{eq:uniform_estimation_x}), we can conclude that $\frac{1}{T}\int_0^T\varphi^T(t,\omega)dt$ is bounded in $L^2(\Omega;\R^n)$. Thus, there exists $\varphi_l$ in $L^2(\Omega;\R^n)$ such that
    \begin{align*}
        \frac{1}{T}\int_0^T\varphi^T(t,\omega)dt\rightharpoonup \varphi_l, \quad \text{in }L^2(\Omega;\R^n).
    \end{align*}    
    On the other hand, taking the time average of the second equation of \eqref{eq:system_equation_y_phi} and taking $L^2(\Omega;\R^n)$-norm, we get
\begin{align*}
    \left\|A^*(\cdot)\frac{1}{T}\int_0^T\phi(t,\cdot)dt\right\|^2_{L^2(\Omega;\R^n)}\leq \frac{1}{T^2}\int_0^T\|\E[C(\cdot)y(t,\cdot)]\|^2_{\R^n}dt+\frac{\|\phi(T,\cdot)\|_{L^2(\Omega;\R^n)}^2+\|\phi(0,\cdot)\|_{L^2(\Omega;\R^n)}^2}{T}\to0,
\end{align*}
as $T\to \infty$, where $\phi=\varphi^T-\varphi^s$. Now, taking the time average and the $L^2(\Omega;\R^n)$-norm of the second equation of \eqref{opt-evol-syst}, and subtracting the term $A(\omega)\varphi^s(\omega)$, we have
\begin{multline*}
     \left\|\frac{1}{T}\int_0^TC^*(\cdot)\E[C(\cdot)x^T(t,\cdot)]dt-A^*(\cdot)\varphi^s(\cdot)\right\|^2_{L^2(\Omega;\R^n)}\leq  \left\|A^*(\cdot)\frac{1}{T}\int_0^T\phi(t,\cdot)dt\right\|^2_{L^2(\Omega;\R^n)}\\+\frac{\|\varphi_T(\cdot)\|_{L^2(\Omega;\R^n)}^2+\|\varphi(0,\cdot)\|_{L^2(\Omega;\R^n)}^2}{T}\to0,
\end{multline*}
as $T\to \infty$. Therefore, $\varphi_l=\varphi^s$ is a solution of the adjoint system \eqref{opt-stat-syst}. Finally, observe that for every $\zeta\in \R^m$ we have that
\begin{multline*}
    (u^s+\E[B^*(\cdot)\varphi^s(\cdot)],\zeta)_{\R^m}=\left(u^s-\frac{1}{T}\int_0^Tu^T(t)dt,\zeta\right)_{\R^m}\\
    +\left(\frac{1}{T}\int_0^Tu^T(t)-\E\left[B^*(\cdot)\frac{1}{T}\int_0^T\varphi^T(t,\cdot)dt\right],\zeta\right)_{\R^m}+\left(\E\left[B^*(\cdot)\frac{1}{T}\int_0^T\varphi^T(t,\cdot)dt\right]-\E\left[B^*(\cdot)\varphi(\cdot)\right],\zeta\right)_{\R^m}\\
    =\left(u^s-\frac{1}{T}\int_0^Tu^T(t)dt,\zeta\right)_{\R^m}+\left(\frac{1}{T}\int_0^T\varphi^T(t,\cdot)dt-\varphi(t,\cdot),B(\cdot)\zeta\right)_{L^2(\Omega;\R^n)}\to 0,
\end{multline*}
as $T\to\infty$ since $B(\cdot)\zeta \in L^2(\Omega;\R^n)$. Note that the second term after the first equation vanishes due to the characterization of evolutionary optimal control \eqref{eq:chara_evol_oc}. Taking $\zeta=u^s+\E[B^*(\cdot)\varphi^s(\cdot)]$ we conclude the result.
\end{proof}

\begin{remark}[Uniqueness of the stationary adjoint state]
Corollary \ref{cor:characterization_us} allows us to ensure the uniqueness of the stationary adjoint state $\varphi^s$, the solution of the second equation in \eqref{opt-stat-syst}. Indeed, assume that there exist two solutions $\varphi_1^s$ and $\varphi_2^s$ of \eqref{opt-stat-syst}. Then, we have that $A^*(\omega)\varphi_1^s(\omega) = A^*(\omega)\varphi_2^s(\omega)$. Furthermore, considering Corollary \ref{cor:characterization_us} and the uniqueness of the optimal control, we have $\E[B^*(\cdot)\varphi_1^s)(\cdot)] = \E[B^*(\cdot)\varphi_2^s(\cdot)]$. Finally, the uniqueness is established by applying the stationary inequality \eqref{eq:ineq_statio_2}.
\end{remark}

\section{Exponential turnpike property}\label{sec:4}
Let us now give the proof of the second main result of our article, that is, Theorem \ref{Th:exponential_turnpike}.

\begin{proof}[{\bf Proof of Theorem \ref{Th:exponential_turnpike}}]
     As in the proof of Theorem \ref{Th:integral_turnpike} let $y=x^T-x^s$ and $\phi=\varphi^T-\varphi^s$ satisfying system \eqref{eq:system_equation_y_phi}. Applying Lemma \ref{lem:inequality_state} to $y$ and using the characterization of the stationary control given by Corollary \ref{cor:characterization_us}, there exists $K_1>0$ such that 
\begin{align}\label{eq:des_y_lemma1}
    \|y(t,\cdot)\|_{L^2(\Omega;\R^n)}^2\leq K_1 \int_0^T\Big(\|\E[B^*(\cdot)\phi(t,\cdot)]\|^2_{\R^m}+ \|\E[C(\cdot)y(t,\cdot)]\|^2_{\R^n}\Big)dt+\|y(0,\cdot)\|^2_{L^2(\Omega;\R^{n})},
\end{align}
    for every $t\in[0,T]$. In a similar way, applying Lemma \ref{lem:inequality_adjoint} to $\phi$ we deduce
    \begin{align}\label{eq:des_phi_lemma2}
        \|\phi(t,\cdot)\|_{L^2(\Omega;\R^n)}^2\leq K_1 \int_0^T\Big(\|\E[B^*(\cdot)\phi(t,\cdot)]\|^2_{\R^m}+ \|\E[C(\cdot)y(t,\cdot)]\|^2_{\R^n}\Big)dt+\|\phi(T,\cdot)\|^2_{L^2(\Omega;\R^{n})},
    \end{align}
    for every $t\in[0,T]$. Using \eqref{eq:estima_cy_u_us} and Corollary \ref{cor:characterization_us}, we have the following inequality
    \begin{multline*}
        \int_0^T\Big(\|\E[C(\cdot)y(t,\cdot)]\|^2_{\R^n}+\|\E[B^*(\cdot)\phi(t,\cdot)]\|^2_{\R^m}\Big)dt\leq \|\phi(0,\cdot)\|_{L^2(\Omega;\R^{n})}\|y(0,\cdot)\|_{L^2(\Omega;\R^{n})}\\
        +\|\phi(T,\cdot)\|_{L^2(\Omega;\R^{n})}\|y(T,\cdot)\|_{L^2(\Omega;\R^{n})}.
    \end{multline*}
    Thus, using Young's inequality for $\varepsilon_1,\,\varepsilon_2>0$ and estimations \eqref{eq:des_y_lemma1} and \eqref{eq:des_phi_lemma2} we deduce
      \begin{multline*}
      \int_0^T\Big(\|\E[C(\cdot)y(t,\cdot)]\|^2_{\R^n}+\|\E[B^*(\cdot)\phi(t,\cdot)]\|^2_{\R^m}\Big)dt\leq \|y(0,\cdot)\|_{L^2(\Omega;\R^{n})}^2\left(\frac{1}{2\varepsilon_1}+\frac{\varepsilon_2}{2}\right)\\+\|\phi(T,\cdot)\|_{L^2(\Omega;\R^{n})}^2\left(\frac{1}{2\varepsilon_2}+\frac{\varepsilon_1}{2}\right)
      +\left(\frac{\varepsilon_1K_2+\varepsilon_2K_1}{2}\right)\int_0^T\Big(\|\E[C(\cdot)y(t,\cdot)]\|^2_{\R^n}+\|\E[B^*(\cdot)\phi(t,\cdot)]\|^2_{\R^m}\Big)dt.
    \end{multline*}
    Therefore, taking $2>\varepsilon_1K_2+\varepsilon_2K_1$ and using inequalities \eqref{eq:des_y_lemma1} and \eqref{eq:des_phi_lemma2} ones again we conclude that there exists $K>0$ independent of  $T$ such that
    \begin{align}\label{eq:estimation_y_phi_in_X}
        \|y\|_{\mathcal{X}}^2  +\|\phi\|_{\mathcal{X}}^2\leq K(\|y(0,\cdot)\|_{L^2(\Omega;\R^{n})}^2+\|\phi(T,\cdot)\|_{L^2(\Omega;\R^{n})}^2),
    \end{align}
    where $\mathcal{X}$ denotes the space $C^0([0,T];L^2(\Omega;\R^n))$. On the other hand, system \eqref{eq:system_equation_y_phi} can be rewritten in the matrix form
    \begin{align*}
        \Lambda\begin{pmatrix} y\\\phi\end{pmatrix}:= \begin{pmatrix}
            -C^*(\omega)\E[C(\cdot)\,\cdot\, ] & -\frac{d}{dt} + A^*(\omega)\\
            0& E_T\\
            \frac{d}{dt} +A(\omega) & B(\omega)\E[B^*(\cdot)\,\cdot\, ]\\
            E_0& 0
        \end{pmatrix}\begin{pmatrix} y\\\phi\end{pmatrix}=\begin{pmatrix} 0\\ \phi(T,\omega)\\0 \\ y(0,\omega)\end{pmatrix}=:\mathcal{Y},
    \end{align*}
where $E_T(\phi):=\phi(T,\omega)$ and $E_0(y):=y(0,\omega)$, a.e. $\omega\in\Omega$. Let us observe that the linear operator $\Lambda^{-1}$ corresponds to the solution operator of the optimality system \eqref{eq:system_equation_y_phi}. By the uniqueness of $y$ and $\phi$, we deduce that the operator $\Lambda^{-1}:\mathcal{W} \to \mathcal{X}^2$ is well defined, where $\mathcal{W}:= (L^2(0,T;L^2(\Omega))\times L^2(\Omega;\R^n))^2$. Moreover, since \eqref{eq:estimation_y_phi_in_X} holds, we can ensure that $\Lambda^-1$ is uniformly bounded with respect $T$, that is,
\begin{align*}
    \|\Lambda^{-1}\|_{\mathcal{L}(\mathcal{W}, \mathcal{X}^2)}\leq K.
\end{align*}
Now, let us consider the new variable 
\begin{align*}
    \hat{y}(t,\omega):=y(t,\omega)f(t),\quad  \hat{\phi}(t,\omega):=\phi(t,\omega)f(t),\quad f(t)=\frac{1}{e^{-\delta t}+e^{-\delta(T-t)}},
\end{align*}
where $\delta>0$ will be chosen later on. Observe that the following system holds
\begin{align}\label{eq:system_variables_hat}
    \begin{cases}
\hat{y}_t(t,\omega)+A(\omega)\hat{y}(t,\omega)=-B(\omega)\E[B^*(\cdot)\hat{\phi}(t,\cdot)]+\delta h(t),\quad t\in (0,T),\\
-\hat{\phi}_t(t,\omega)+A^*\hat{\phi}(t,\omega)=C^*(\omega)\E[C(\cdot)\hat{y}(t,\cdot)]-\delta h(t),\quad t\in(0,T),\\
\hat{y}(0,\omega)=y(0,\omega)f(t),\quad \hat{\phi}(T,\omega)=\phi(T,\omega)f(T),
    \end{cases}
\end{align}
where $h(t):=(e^{-\delta t}-e^{-\delta(T-t)})f(t)$. Then using the definition of $\Lambda$ and the fact that $f(T)=f(0)<1$, equation \eqref{eq:system_variables_hat} can be rewritten as
\begin{align}\label{eq:Lambda_P_z}
   (\Lambda-\delta\mathcal{P})\mathcal{Z}:= \left[\Lambda-\delta\begin{pmatrix}
        h(t)&0\\
        0&0\\
        0&-h(t)\\
        0&0
    \end{pmatrix}\right]\begin{pmatrix}
        \hat{y}\\ \hat{\phi}
    \end{pmatrix}=f(0)\mathcal{Y}.
\end{align}
Observe that $\|P\|_{\mathcal{L}(\mathcal{X}^2,\mathcal{W})}\leq 1$. Then, applying $\Lambda^{-1}$ from the left of equation \eqref{eq:Lambda_P_z} we obtain 
\begin{align}\label{eq:to_invert}
    (I-\delta\Lambda^{-1}\mathcal{P})\mathcal{Z}=\Lambda^{-1}\mathcal{Y}.
\end{align}
Observe that for a $\delta>0$ small enough we can guaranty that $\gamma:=\delta\|\Lambda\|_{\mathcal{L}(\mathcal{W},\mathcal{X}^2)}<1$, where $\gamma>0$ is a constant independent of $T$ due to \eqref{eq:estimation_y_phi_in_X}. Therefore, since \cite[Theorem 2.14]{MR3184286}, there exists a unique solution $\mathcal{Z}$ of \eqref{eq:to_invert}. Moreover, we have the following estimation 
\begin{align*}
    \|(I-\delta\Lambda^{-1}\mathcal{P})^{-1}\|_{\mathcal{L}(\mathcal{X}^2)}\leq \frac{1}{1-\gamma}
\end{align*}
Namely, applying Therefore, by multiplying equation \eqref{eq:to_invert} by $(I-\delta \Lambda^{-1} \mathcal{P})^{-1}$ and taking the norm in $\mathcal{X}$, we have
\begin{align*}
    \|\hat{y}\|_{\mathcal{X}}+ \|\hat{\phi}\|_{\mathcal{X}}\leq \frac{\|\Lambda^{-1}\|_{\mathcal{L}(\mathcal{W},\mathcal{X}^2)}}{1-\gamma}f(0)(\|y(0,\cdot)\|_{L^2(\Omega;\R^{n})}^2+\|\phi(T,\cdot)\|_{L^2(\Omega;\R^{n})}^2).
\end{align*}
Returning to the original variables, we conclude that 
\begin{multline*}
    \|x^{T}(t,\cdot)-x^s(\cdot)\|_{L^2(\Omega;\R^n)}+   \|\varphi^{T}(t,\cdot)-\varphi^s(\cdot)\|_{L^2(\Omega;\R^n)}\\
    \leq K(\|x_0(\cdot)-x^s(\cdot)\|_{L^2(\Omega;\R^n)}+\|\varphi_T(\cdot)-\varphi^s(\cdot)\|_{L^2(\Omega;\R^n)})(e^{-\delta t}+e^{-\delta(T-t)}),
\end{multline*}
where $K>0$ is the same constant from \eqref{eq:estimation_y_phi_in_X}.
\end{proof}

\section{Numerical example}\label{sec:5}
This section is devoted to illustrate our theoretical results with a numerical example. Let us fix a time horizon $T=10$, and we introduce the random variables $\alpha,\,\beta\sim \text{Pois}(\lambda)$ with $\lambda=5$. We consider the evolutive optimal control problems \eqref{eq:evol_funtional}-\eqref{eq:evol_equation} and stationary \eqref{eq:statio_functional}-\eqref{eq:statio_equation} with matrices
\begin{align}\label{num-exam}
A(\omega)=\alpha(\omega)\begin{pmatrix}
        2&-5\\5&0.1
    \end{pmatrix},\quad B(\omega)=\beta(\omega)\begin{pmatrix}
        5\\7
    \end{pmatrix},\quad C=\begin{pmatrix}
        0&1\\1&0
    \end{pmatrix},\quad \text{and}\quad z=\begin{pmatrix}
        4\\4
    \end{pmatrix}.
\end{align}
Observe that with these matrices, Assumptions \ref{A1} and \ref{A2} holds. The numerical simulation of the equation \eqref{eq:evol_equation} uses the implicit Runga–Kutta method with $n=150$ time steps. The optimal control problem \eqref{eq:evol_funtional} is solved by the gradient descent method. The above is implemented through Gekko library \cite{pr6080106} in Python. The problem \eqref{eq:statio_functional}-\eqref{eq:statio_equation} is solved using Gekko, which this time uses the free solver APOPT. Additionally, to approximate the expectation, we consider several realizations of the random variables. The expectation of states and controls of both problems are shown in Figure \ref{fig1}.

\begin{figure}[h!]
\centering
\subfloat[Average of the optimal evolutive and stationary states. In light blue, different realizations of the states are illustrated to approximate the average.]{
\includegraphics[width=0.45\textwidth]{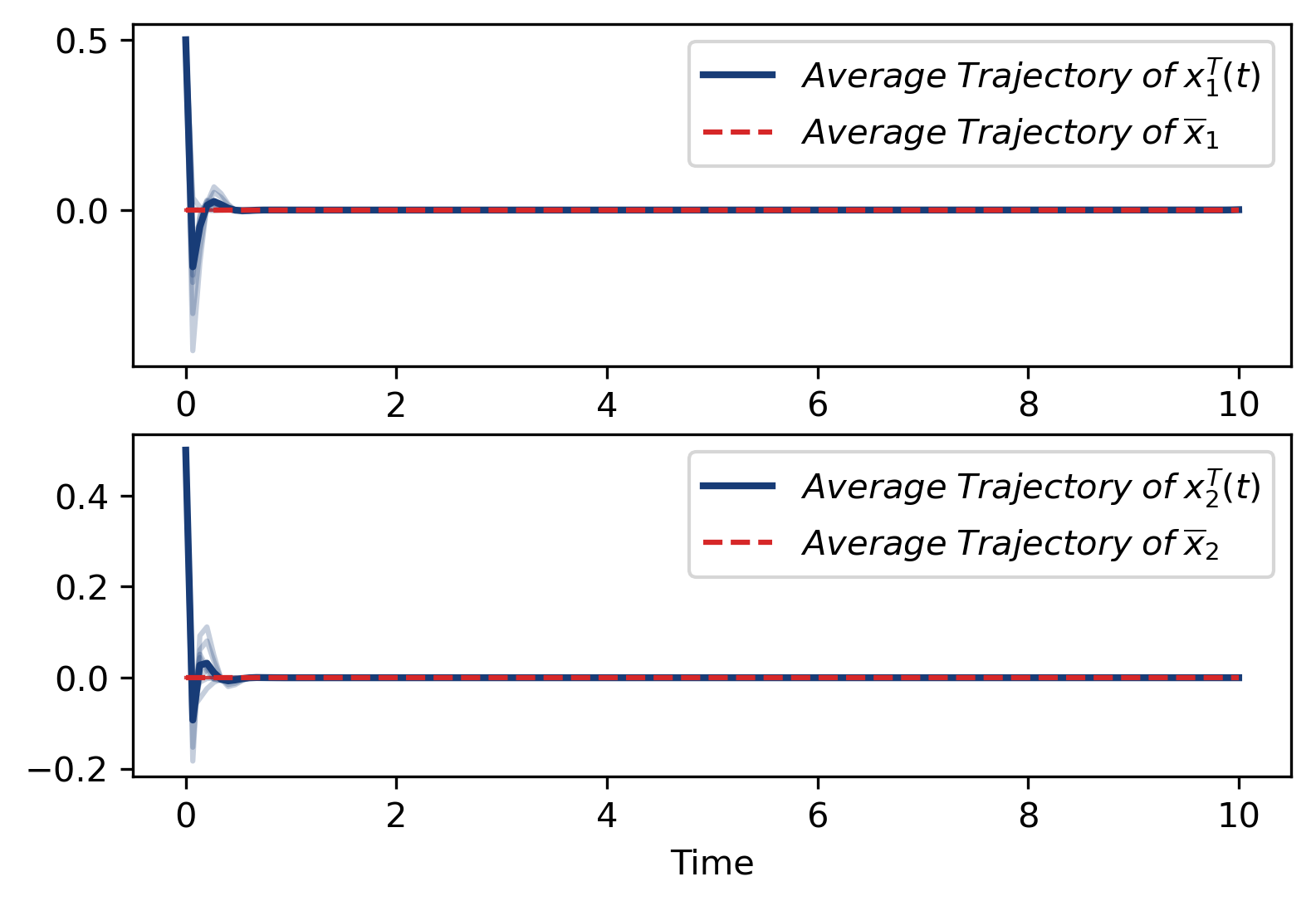}}
\subfloat[Optimal controls of the evolutive and stationary states. The second plot illustrates the turnpike property upper bound.]{
\includegraphics[width=0.45\textwidth]{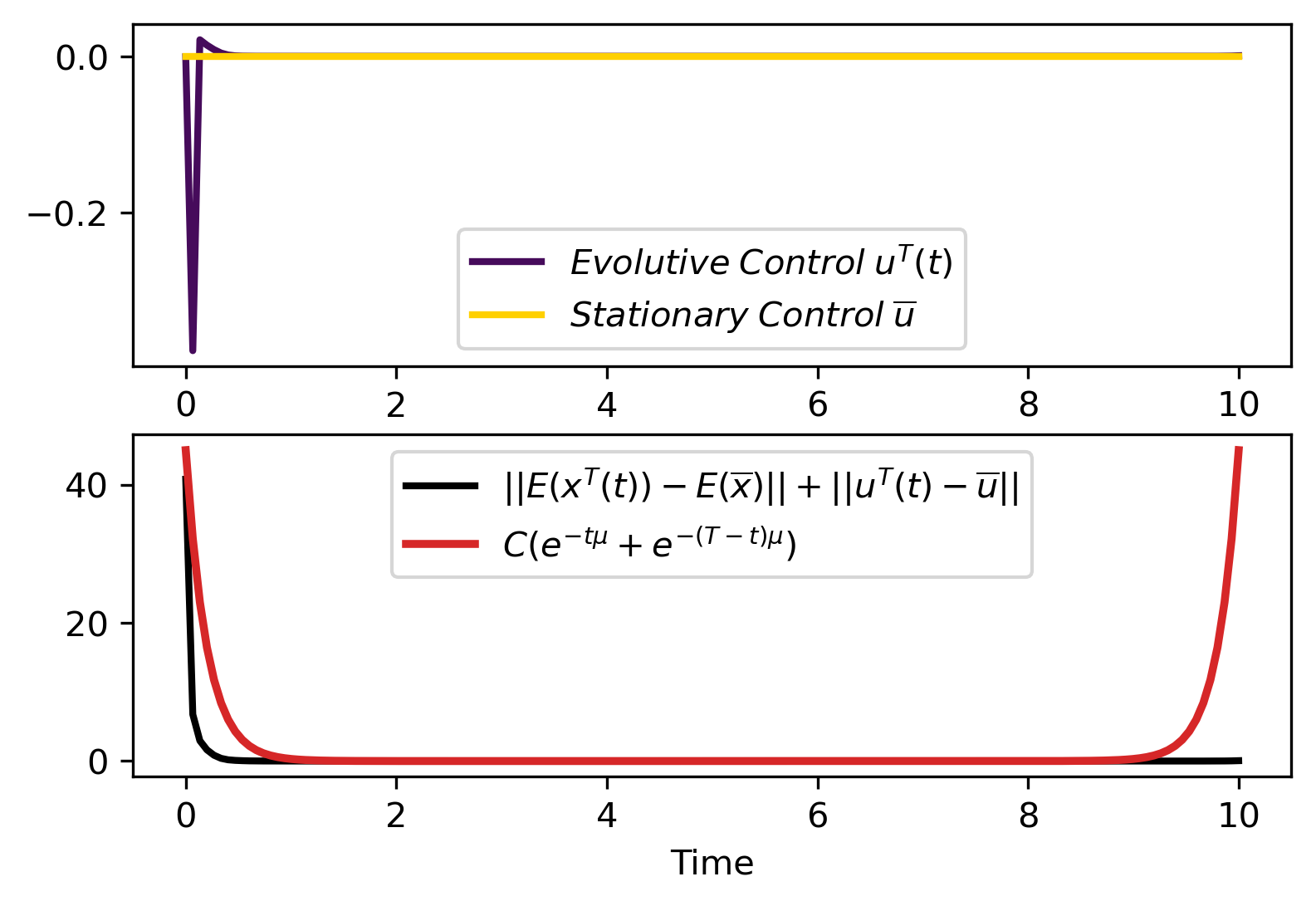}}\caption{Illustration of the turnpike property for optimal states and controls of \eqref{eq:evol_funtional} and \eqref{eq:statio_functional}.}\label{fig1}
\end{figure}

Let us note that the matrices given in \eqref{num-exam} satisfy the hypotheses of \cite{HLZ23}, and therefore, the turnpike property for the optimal control problem is considered there. The optimal states for the functional defined in \cite{HLZ23} are illustrated in Figure \ref{fig2},

\begin{figure}[h!]
    \centering
    \includegraphics[width=0.45\textwidth]{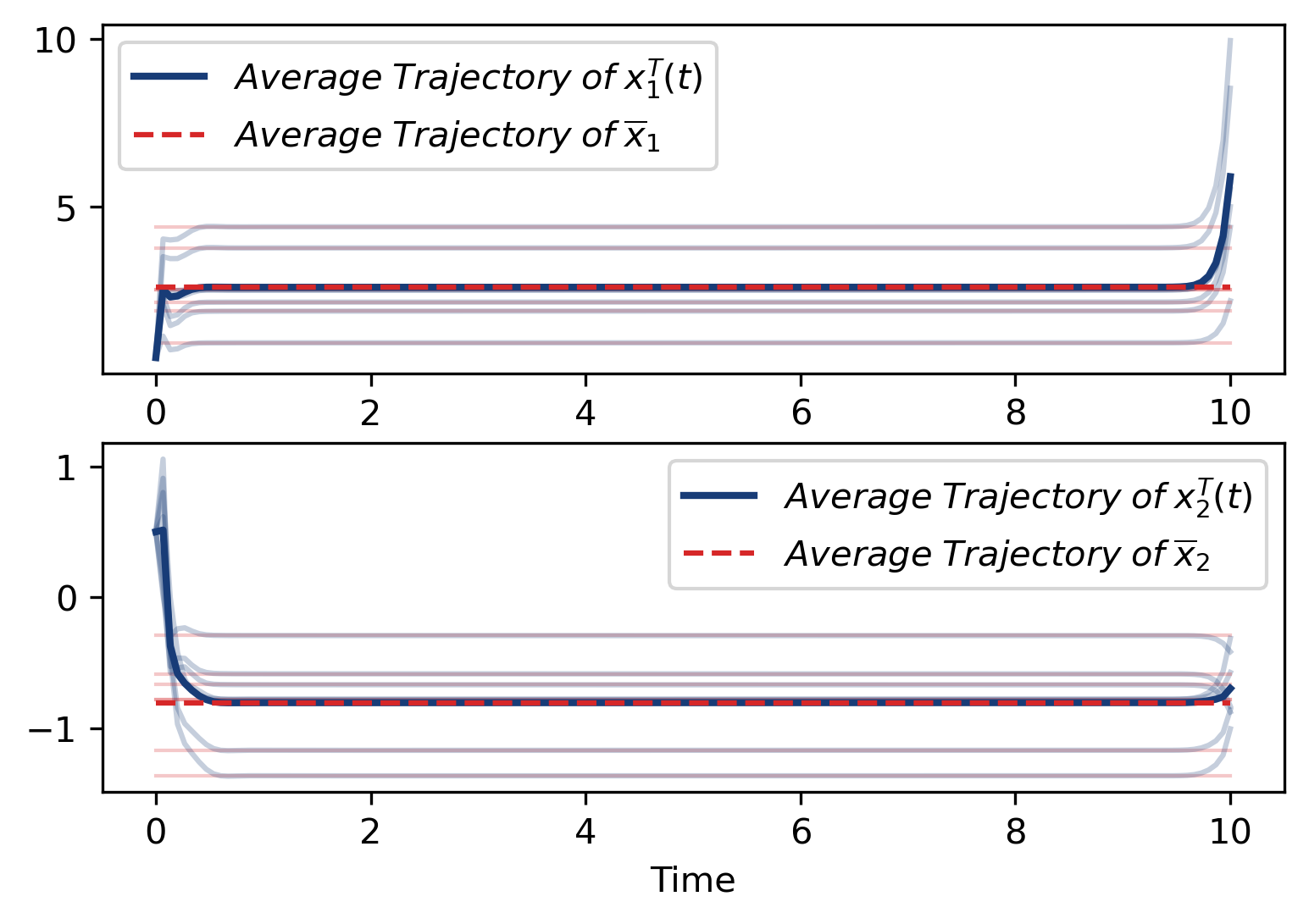}
    \caption{Optimal evolutive and stationary states of the functionals defined in \cite{HLZ23}. In light blue and red, we illustrate the evolutive and stationary optimal states, respectively, for different realizations.}
    \label{fig2}
\end{figure}

From Figure \ref{fig1} and \ref{fig2}, it is immediate to notice the big difference between the result in this article and in \cite{HLZ23}. In our case, we have only penalized the expectation of the state, unlike in \cite{HLZ23} where the cost functional considers the penalty of the whole state. We refer to the next section for a better understanding of this difference and how the hypotheses of our article and \cite{HLZ23} are related.

\section{Open problems and further commentaries}\label{sec:6}
In this article, the integral and exponential turnpike properties for parameter-dependent equations with averaged observations have been proved. The main hypotheses in order to obtain these results are Assumptions \ref{A1} and \ref{A2}. We called the $(A,C)$ detectability and $(A,B)$ stabilizability on average assumptions. For a better understanding of these concepts and their relationship to the existing literature, we present a non-exhaustive discussion on them.

\medbreak

$\bullet$  We start with an interpretation of Assumption \ref{A1} with respect to the uniform detectability condition, a concept which is given in the following definition.
\begin{definition}
 A pair $(A,C)\in  C^0(\Omega,\mathcal{L}(\R^n))^2$ is uniformly detectable if there exists a constant $\alpha>0$ parameter-independent, such that for almost every $\omega\in \Omega$ there exists $K_C(\omega)$ such that for all $v\in\R^n$
   \begin{align}
       \label{unif_det}
    ((A(\omega)+K_C (\omega)C(\omega)) v, v)_{\R^n}\geq \alpha\|v\|_{\R^n}^2.
   \end{align}
  \end{definition}

In order to interpret our assumption, we shall use the following lemma. 
\begin{lemma}
    A pair $(A,C)\in (\mathcal{L}(\R^n))^2$ is not detectable if and only if for every $\alpha>0$  there there exists a  $v_\alpha \in \R^n$ such that
    \begin{align}
    \label{not_det_equi}
       (Av_\alpha,v_\alpha)_{\R^n}< \alpha\|v_\alpha\|_{\R^n}^2 \quad {\rm and} \quad Cv_\alpha =0. 
    \end{align}
\end{lemma}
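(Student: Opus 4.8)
The plan is to recast the asserted equivalence as the single clean statement that $(A,C)$ is detectable if and only if $A$ is coercive on $\ker C$, i.e. if and only if there is some $\beta>0$ with $(Av,v)_{\R^n}\geq\beta\|v\|_{\R^n}^2$ for all $v\in\ker C$. The lemma is then exactly the contrapositive of both implications, since condition \eqref{not_det_equi} holding for every $\alpha>0$ is precisely the negation of coercivity of $A$ on $\ker C$. The observation driving everything is that if $Cv=0$, then $((A+K_CC)v,v)_{\R^n}=(Av,v)_{\R^n}$ for \emph{every} feedback $K_C$: the feedback term is powerless on $\ker C$.

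For the easy direction I would assume \eqref{not_det_equi} holds for every $\alpha>0$ and deduce non-detectability directly. Fix an arbitrary $K_C\in\mathcal{L}(\R^n)$ and an arbitrary $\alpha>0$, and take the vector $v_\alpha$ supplied by \eqref{not_det_equi}. Since $Cv_\alpha=0$, we obtain $((A+K_CC)v_\alpha,v_\alpha)_{\R^n}=(Av_\alpha,v_\alpha)_{\R^n}<\alpha\|v_\alpha\|_{\R^n}^2$, so the coercivity inequality \eqref{unif_det} fails for this $K_C$ with this $\alpha$. As $\alpha$ and $K_C$ were arbitrary, no admissible pair making \eqref{unif_det} hold can exist, i.e. $(A,C)$ is not detectable.

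The substantive direction is the converse, which I would prove by contraposition: assuming $A$ is coercive on $\ker C$ with some constant $\beta>0$, I construct an explicit feedback witnessing detectability. The natural choice is $K_C=s\,C^*$ with $s>0$ large, for which $((A+sC^*C)v,v)_{\R^n}=(Av,v)_{\R^n}+s\|Cv\|_{\R^n}^2$. Decomposing $v=v_0+v_1$ orthogonally with $v_0\in\ker C$ and $v_1\in(\ker C)^\perp$, I exploit that $C$ is injective on $(\ker C)^\perp$ to obtain a quantitative bound $\|Cv\|_{\R^n}^2=\|Cv_1\|_{\R^n}^2\geq c^2\|v_1\|_{\R^n}^2$ for some $c>0$, available in finite dimension by compactness of the unit sphere. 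Writing $a=\|v_0\|_{\R^n}$ and $b=\|v_1\|_{\R^n}$, and using $(Av_0,v_0)_{\R^n}\geq\beta a^2$ together with $\|A\|$-bounds on the three remaining terms of the expansion of $(Av,v)_{\R^n}$, the quadratic form is minorised by $\beta a^2-2\|A\|ab+(sc^2-\|A\|)b^2$.

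The main obstacle is precisely this upgrade from coercivity on the subspace $\ker C$ to uniform coercivity on all of $\R^n$, and it is here that the size of $s$ is used: the symmetric matrix $\begin{pmatrix}\beta & -\|A\|\\ -\|A\| & sc^2-\|A\|\end{pmatrix}$ is positive definite as soon as $s>c^{-2}\big(\|A\|+\|A\|^2/\beta\big)$, and then its smallest eigenvalue $\alpha>0$ yields $((A+sC^*C)v,v)_{\R^n}\geq\alpha(a^2+b^2)=\alpha\|v\|_{\R^n}^2$ for all $v$, establishing \eqref{unif_det} with feedback $K_C=sC^*$. I would close by checking that negating this equivalence reproduces exactly the claimed statement, the only delicate point being to track the logical quantifiers carefully through the contrapositive.
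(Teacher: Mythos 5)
Your proof is correct and follows essentially the same route as the paper: the easy direction is the observation that the feedback term vanishes on $\ker C$, and the substantive direction is proved by exhibiting the feedback $K_C = s\,C^*$ with $s$ large, which is exactly the paper's choice $K_C=\tfrac{\tilde\alpha}{\lambda^2}C^*$. Your version is in fact more complete at the step the paper dismisses as ``easy to check'': the orthogonal splitting $v=v_0+v_1$ and the positive-definiteness of the $2\times 2$ Gram matrix are precisely what is needed to absorb the cross terms between $\ker C$ and $(\ker C)^\perp$, a point the paper's sketch leaves implicit.
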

\begin{proof}
    The proof of the sufficient condition is a direct computation.

    In order to prove the necessary condition, we argue by contradiction, that is, there exists $\alpha>0$ such that for every $v$ either $(Av,v)_{\R^n}\geq \alpha\|v\|_{\R^n}^2$ or $Cv \not= 0$. 

    Define $K_C=\frac{\tilde \alpha}{\lambda^2} C^*$, where 
    \begin{align}
    \lambda=\inf_{v\notin \ker C}\frac{|Cv|}{|v|} >0, \label{sz-open}
    \end{align}
    and
    \begin{align*}
    \tilde \alpha > \alpha +\sup_{v\notin \ker A} (-\frac{(Av,v)}{|v|^2} ).
    \end{align*}
Let us observe that \eqref{sz-open} is not trivial in infinite dimension.

    Then it is easy to check that for such constructed $K_C$, for every $v\in\R^n$
    $$
    ((A+K_C C) v, v)_{\R^n}\geq \alpha\|v\|_{\R^n}^2,
    $$
    i.e. the pair $(A,C)$ is detectable, which contradicts the assumption. 
\end{proof}

Under a procedure analogous to the previous one, the following result is immediately obtained.
\begin{lemma}
\label{unif_det_lemma}
    A pair $(A,C)\in  C^0(\Omega,\mathcal{L}(\R^n))^2$ is not uniformly detectable on $\Omega$ if and only if  for every $\alpha>0$  there exists a subset $\Omega_1\subset \Omega$ of a positive measure such that for every $\omega\in \Omega_1$ there is $v_\alpha \in L^2(\Omega_1;\R^n)$ such that 
    \begin{align}
    \label{not_unifdet_equi}
      (A(\omega) v_\alpha(\omega),v_\alpha(\omega))_{\R^n}< \alpha\|v_\alpha(\omega)\|_{\R^n}^2 \quad {\rm and} \quad C(\omega) v_\alpha(\omega) =0. 
    \end{align}
\end{lemma}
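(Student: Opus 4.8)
The plan is to obtain this uniform statement as a parametrized, measurable version of the preceding pointwise lemma, handling the two implications separately and isolating the measurable selection as the single nontrivial point. The guiding observation is that, for each fixed $\omega$, the objects appearing here are exactly those of the previous lemma applied to the frozen pair $(A(\omega),C(\omega))$, so the content is almost entirely in passing from an $\omega$-by-$\omega$ statement to one involving a genuine $L^2(\Omega_1;\R^n)$ function.

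For the sufficiency I would argue directly, as in the pointwise case. Suppose that for every $\alpha>0$ such a set $\Omega_1$ and function $v_\alpha\in L^2(\Omega_1;\R^n)$ exist, and fix any candidate feedback $K_C\in C^0(\Omega,\mathcal{L}(\R^n))$ together with any $\alpha>0$. For a.e.\ $\omega\in\Omega_1$,
\[
((A(\omega)+K_C(\omega)C(\omega))v_\alpha(\omega),v_\alpha(\omega))_{\R^n}=(A(\omega)v_\alpha(\omega),v_\alpha(\omega))_{\R^n}<\alpha\|v_\alpha(\omega)\|_{\R^n}^2,
\]
because $C(\omega)v_\alpha(\omega)=0$ annihilates the feedback term. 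Thus the coercivity inequality defining uniform detectability fails at level $\alpha$ on the positive-measure set $\Omega_1$, for this arbitrary feedback; since $\alpha>0$ was arbitrary, no uniform constant can work and the pair is not uniformly detectable.

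For the necessity I would first unwind the negation of the definition: $(A,C)$ fails to be uniformly detectable precisely when, for every $\alpha>0$, the assertion ``for a.e.\ $\omega$ there is $K_C(\omega)$ rendering $A(\omega)+K_C(\omega)C(\omega)$ coercive with constant $\alpha$'' is false, i.e.\ there is a set $\Omega_1=\Omega_1(\alpha)$ of positive measure on which, for each $\omega$, no feedback achieves the bound. Applying the previous lemma at each fixed $\omega\in\Omega_1$, this is equivalent to the pointwise existence, for each such $\omega$, of a vector with $(A(\omega)v,v)_{\R^n}<\alpha\|v\|_{\R^n}^2$ and $C(\omega)v=0$; by homogeneity I may normalize to $\|v\|_{\R^n}=1$.

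The main obstacle is upgrading these pointwise choices to a single measurable function $v_\alpha\in L^2(\Omega_1;\R^n)$. The plan is to invoke a measurable selection argument applied to the set-valued map
\[
\omega\longmapsto\{v\in\R^n:\ \|v\|_{\R^n}=1,\ (A(\omega)v,v)_{\R^n}<\alpha,\ C(\omega)v=0\},
\]
whose values are nonempty on $\Omega_1$ by the previous step and whose graph is measurable in $(\omega,v)$ because $A$ and $C$ are continuous in $\omega$; the Kuratowski--Ryll-Nardzewski selection theorem then yields a measurable selector $\omega\mapsto v_\alpha(\omega)$. Being unit-normalized on the finite-measure set $\Omega_1$, this selector automatically lies in $L^2(\Omega_1;\R^n)$ and satisfies the two required relations. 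The only delicate technical point I anticipate is combining the strict inequality with the equality constraint $C(\omega)v=0$ while keeping the target set amenable to closed-valued selection, which I would address by writing it as a countable union over the closed conditions $(A(\omega)v,v)_{\R^n}\le\alpha-1/k$, selecting on each, and taking the first nonempty level.
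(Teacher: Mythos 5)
Your argument is correct, but it is worth noting that the paper does not actually prove this lemma: it is stated as being ``immediately obtained'' by a procedure analogous to the pointwise lemma that precedes it, whose necessity direction is proved by contradiction via the explicit feedback $K_C=\frac{\tilde\alpha}{\lambda^2}C^*$. Your sufficiency direction coincides with the paper's intended ``direct computation''. For the necessity direction you take a genuinely more careful route: you reduce to the pointwise lemma $\omega$-by-$\omega$ on the bad set $\Omega_1$ and then upgrade the pointwise choices to a single element of $L^2(\Omega_1;\R^n)$ by a measurable selection (Kuratowski--Ryll-Nardzewski, after replacing the strict inequality by the closed conditions $(A(\omega)v,v)_{\R^n}\le\alpha-1/k$ on the compact fibers $S^{n-1}\cap\ker C(\omega)$). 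This measurability step is silently assumed by the paper but is genuinely needed, since the lemma is later used by integrating $v_\alpha(\cdot)\chi_{\Omega_1}(\cdot)$ over $\Omega$ in Lemma \ref{assum_interp}; your treatment also explains why $\Omega_1$ itself is measurable (a countable union of closed projections of closed sets with compact fibers). Two small points to tighten: your necessity step invokes the preceding lemma at a \emph{fixed} $\alpha$, which follows from its proof (the constructed $K_C$ achieves exactly the constant $\alpha$ when no bad $v$ exists at that level) but not from its literal ``for every $\alpha$'' statement, so you should say you are using that refinement; and in the countable-union trick you should select the first level $k$ whose set has \emph{positive measure} (not merely nonempty), or patch the selectors across the levels, so that the resulting $\Omega_1$ still has positive measure.
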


Finally, we can prove the following, which gives us an interpretation of our hypothesis.
\begin{lemma}
\label{assum_interp}
   Suppose that Assumption \ref{A1} holds. Then the pair $(A,C)$ is uniformly detectable on $\Omega$. 
      \end{lemma}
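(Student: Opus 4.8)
The plan is to argue by contradiction, using the characterization of the failure of uniform detectability provided by Lemma \ref{unif_det_lemma}. Suppose that $(A,C)$ is \emph{not} uniformly detectable on $\Omega$. Applying Lemma \ref{unif_det_lemma} with the specific choice $\alpha=\alpha_C/2$, where $\alpha_C>0$ is the constant from Assumption \ref{A1}, I obtain a subset $\Omega_1\subset\Omega$ of positive measure and a nonzero element $v_\alpha\in L^2(\Omega_1;\R^n)$ such that \eqref{not_unifdet_equi} holds for a.e. $\omega\in\Omega_1$, namely $(A(\omega)v_\alpha(\omega),v_\alpha(\omega))_{\R^n}<\tfrac{\alpha_C}{2}\|v_\alpha(\omega)\|_{\R^n}^2$ and $C(\omega)v_\alpha(\omega)=0$. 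The goal is to feed a suitable global test function built from $v_\alpha$ into the inequality of Assumption \ref{A1} and reach a contradiction.

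First I would extend $v_\alpha$ by zero to all of $\Omega$, defining $\tilde v\in L^2(\Omega;\R^n)$ by $\tilde v=v_\alpha$ on $\Omega_1$ and $\tilde v=0$ on $\Omega\setminus\Omega_1$; then $\tilde v\neq 0$ and $\|\tilde v\|_{L^2(\Omega;\R^n)}=\|v_\alpha\|_{L^2(\Omega_1;\R^n)}>0$. The crucial point is that, by construction, $C(\omega)\tilde v(\omega)=0$ for a.e. $\omega\in\Omega$: this holds on $\Omega_1$ because $Cv_\alpha=0$ there, and on the complement because $\tilde v$ vanishes. Consequently the averaged feedback term collapses, since $\E[K_C C\tilde v]=\int_\Omega K_C(\omega)C(\omega)\tilde v(\omega)\,d\mu(\omega)=0$, and therefore $K_C\E[K_C C\tilde v]=0$ as an element of $L^2(\Omega;\R^n)$. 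This is the step where the nonlocal coupling induced by the expectation operator must be controlled, and it is precisely the extension by zero---ensuring the support of $\tilde v$ lies inside $\Omega_1$---that makes $C\tilde v$ vanish \emph{globally} rather than only on $\Omega_1$; this is the main obstacle of the argument.

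Finally, I would evaluate Assumption \ref{A1} at $\tilde v$. On the one hand, the vanishing of the feedback term gives $(A\tilde v+K_C\E[K_C C\tilde v],\tilde v)_{L^2(\Omega;\R^n)}=(A\tilde v,\tilde v)_{L^2(\Omega;\R^n)}\geq\alpha_C\|\tilde v\|_{L^2(\Omega;\R^n)}^2$. On the other hand, integrating the pointwise bound over $\Omega_1$ and using that $\tilde v$ is supported there yields $(A\tilde v,\tilde v)_{L^2(\Omega;\R^n)}=\int_{\Omega_1}(A v_\alpha,v_\alpha)_{\R^n}\,d\mu\leq\tfrac{\alpha_C}{2}\|\tilde v\|_{L^2(\Omega;\R^n)}^2$. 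Combining the two estimates gives $\alpha_C\|\tilde v\|^2\leq\tfrac{\alpha_C}{2}\|\tilde v\|^2$, which is impossible since $\|\tilde v\|>0$ and $\alpha_C>0$. This contradiction shows that $(A,C)$ must be uniformly detectable, completing the argument. The only remaining delicate points are the legitimacy of the extension by zero as an $L^2(\Omega;\R^n)$ test function and the harmless passage from the strict pointwise inequality to the integrated one, both of which are routine.
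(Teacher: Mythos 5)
Your proposal is correct and follows essentially the same route as the paper's own proof: contradiction via Lemma \ref{unif_det_lemma}, extension of $v_\alpha$ by zero so that $C\tilde v$ vanishes globally and the averaged feedback term $K_C\E[K_C C\tilde v]$ drops out, then testing Assumption \ref{A1} against $\tilde v$. Your explicit choice $\alpha=\alpha_C/2$ makes the final contradiction slightly more transparent than the paper's generic $\alpha$, but the argument is the same.
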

\begin{proof}
Assume the contrary. Then, according to Lemma \ref{unif_det_lemma} for every $\alpha>0$, there exists a subset $\Omega_1$ of a positive measure such that for every $\omega\in \Omega_1$ there is $v_\alpha (\omega)$ such that relation \eqref{not_unifdet_equi} holds. 

Let us now define  $\tilde v(\omega)= v_\alpha (\omega) \chi_{\Omega_1}(\omega)$. Then $C(\omega) v(\omega)=0$ for every $\omega \in \Omega$. 
Consequently, for every $K_C\in C^0(\Omega,\mathcal{L}(\R^n))$
 \begin{align*}
       (A\tilde v,\tilde v)_{L^2(\Omega;\R^n)}  + |E(K_C Cv)|^2  =  (A\tilde v,\tilde v)_{L^2(\Omega_1;\R^n)} < \alpha\|v_\alpha(\omega)\|_{L^2(\Omega_1;\R^n)}^2=\alpha\|v_\alpha(\omega)\|_{L^2(\Omega;\R^n)}^2,
    \end{align*}
    which contradicts Assumption \ref{A1}. 
  \end{proof}

\medbreak

$\bullet$ The same procedure holds for the Assumption \ref{A2} but now with the uniformly stabilizability condition on $\Omega$. 
    
\medbreak

$\bullet$ Now, in \cite{HLZ23}, the turnpike property was established under similar kinds of assumptions used in this paper. Actually, one takes the following form.
    \begin{assumption}\label{A0}
     There exists a  
feedback operator $K_C\in C^0(\Omega,\mathcal{L}(\R^n))$ and a positive constant $\alpha_C>0$ such that
    \begin{align}
    \label{A0_ineq}
    (Av+ K_C Cv,v)_{L^2(\Omega;\R^n)}\geq \alpha_C\|v\|_{L^2(\Omega;\R^n)}^2,\quad \text{for every }v\in L^2(\Omega;\R^n).
\end{align}
\end{assumption}

The following lemma provides the interpretation of this assumption. 
\begin{lemma}
    Assumption \ref{A0} is equivalent to the statement that the pair $(A(\omega), C(\omega))$ is uniformly detectable, i.e. it satisfies the detectability assumption \eqref{unif_det} for almost every $\omega \in \Omega$ with the parameter independent constant $\alpha$.
\end{lemma}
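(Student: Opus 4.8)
The plan is to prove both implications directly, the nontrivial content lying in extracting a pointwise (in $\omega$) estimate from the integral inequality \eqref{A0_ineq}.

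First I would dispatch the easy direction, that uniform detectability implies Assumption \ref{A0}. Assuming \eqref{unif_det} holds for a.e.\ $\omega$ with a parameter-independent constant $\alpha$ and feedback $K_C(\omega)$, I take an arbitrary $v\in L^2(\Omega;\R^n)$, apply \eqref{unif_det} pointwise to $v(\omega)\in\R^n$, and integrate over $\Omega$ against $\mu$. Since
\begin{align*}
(Av+K_CCv,v)_{L^2(\Omega;\R^n)}=\int_\Omega\big((A(\omega)+K_C(\omega)C(\omega))v(\omega),v(\omega)\big)_{\R^n}\,d\mu(\omega)\geq\alpha\int_\Omega\|v(\omega)\|_{\R^n}^2\,d\mu(\omega),
\end{align*}
this gives \eqref{A0_ineq} with $\alpha_C=\alpha$ and the same feedback operator.

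For the converse, the key observation is that the very feedback $K_C$ appearing in \eqref{A0_ineq} already works pointwise. Writing $M(\omega):=A(\omega)+K_C(\omega)C(\omega)$, I would test \eqref{A0_ineq} against the localized functions $v(\omega)=w\,\chi_{\Omega_1}(\omega)$ for a fixed vector $w\in\R^n$ and an arbitrary measurable subset $\Omega_1\subset\Omega$. This reduces \eqref{A0_ineq} to
\begin{align*}
\int_{\Omega_1}\Big[(M(\omega)w,w)_{\R^n}-\alpha_C\|w\|_{\R^n}^2\Big]\,d\mu(\omega)\geq 0,\qquad\text{for every measurable }\Omega_1\subset\Omega.
\end{align*}
Because $A$, $C$, $K_C$ are uniformly bounded and $\mu$ is a probability measure, the integrand is an $L^1$ function of $\omega$; since $\Omega_1$ is arbitrary (choosing $\Omega_1$ to be the set where the bracket is negative), it must be nonnegative for $\mu$-a.e.\ $\omega$. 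This produces, for each fixed $w$, a $\mu$-null exceptional set $N_w$ outside of which $(M(\omega)w,w)_{\R^n}\geq\alpha_C\|w\|_{\R^n}^2$.

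The main obstacle, and the step requiring care, is passing from ``for each fixed $w$, a.e.\ $\omega$'' to ``a.e.\ $\omega$, for all $w$'', since a priori the null set $N_w$ depends on $w$. I would resolve this by separability of $\R^n$: fix a countable dense set $\{w_k\}_{k\geq1}\subset\R^n$ and put $N:=\bigcup_k N_{w_k}$, which remains $\mu$-null. For every $\omega\notin N$ the inequality $(M(\omega)w_k,w_k)_{\R^n}\geq\alpha_C\|w_k\|_{\R^n}^2$ holds for all $k$, and since $w\mapsto(M(\omega)w,w)_{\R^n}-\alpha_C\|w\|_{\R^n}^2$ is a continuous (quadratic) function on $\R^n$, the density of $\{w_k\}$ upgrades the estimate to all $w\in\R^n$. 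Thus \eqref{unif_det} holds for a.e.\ $\omega$ with $\alpha=\alpha_C$ and the same $K_C$, which closes the equivalence.
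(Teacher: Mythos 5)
Your proof is correct and follows essentially the same route as the paper: both directions hinge on localizing the integral inequality \eqref{A0_ineq} on measurable subsets of $\Omega$, with the easy direction obtained by integrating the pointwise estimate \eqref{unif_det}. The only difference is that you argue the hard direction directly (fixing $w$, testing against $w\,\chi_{\Omega_1}$ for arbitrary $\Omega_1$, then upgrading via a countable dense set and continuity of the quadratic form), whereas the paper argues by contradiction with a pointwise selection $v_0(\omega)$ on a bad set; your version has the minor advantage of sidestepping the measurable-selection issue and of making explicit why the exceptional null set can be taken independent of $w$.
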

\begin{proof}
   Let us first suppose \ref{A0}  holds for some $K_C\in C^0(\Omega,\mathcal{L}(\R^n))$ and $\alpha_C>0$. Arguing by contradiction, suppose the pair  $(A(\omega), C(\omega))$ is not uniformly detectable. Then  there exists a parameter subset $\Omega_1 \subseteq \Omega$ of a positive measure such that for every $\omega \in \Omega_1$ there exist a vector $v_0(\omega)$ such that
     \begin{align}
     \label{contr_det}
    ((A(\omega)+ K_C(\omega) C(\omega))v_0(\omega),v_0(\omega))_{\R^n)} <  \alpha_C\|v_0(\omega)\|_{\R^n}^2,
 \end{align}
 where operators $K_C(\omega)$ and the constant $\alpha$ are those appearing in \eqref{A0_ineq}.
 
 Let us now define 
 $\tilde v(\omega)= v_0 (\omega) \chi_{\Omega_1}(\omega)$. Taking into account \eqref{contr_det} we obtain
 \begin{align*}
    (A\tilde v+ K_C C\tilde v,\tilde v)_{L^2(\Omega;\R^n)}< \alpha_C\|\tilde v\|_{L^2(\Omega;\R^n)}^2,
\end{align*}
    which contradicts Assumption \ref{A0}.

    Reciprocally, in order to prove the reverse implication, it is enough to note that for almost every $\omega \in \Omega$ there exists $K_C(\omega)$ such that the inequality 
     \begin{align*}
        ((A(\omega)+ K_C(\omega) C(\omega))v_0(\omega),v_0(\omega))_{\R^n)} \geq  \alpha\|v_0(\omega)\|_{\R^n}^2,
 \end{align*}
 holds with a parameter independent constant $\alpha$.
 Integrating the last equation with respect to $\Omega$ we get the desired inequality \eqref{A0_ineq}. 
\end{proof}

Comparing the last result with Lemma \ref{assum_interp}, we see that Assumption \ref{A0} required for the turnpike property of simultaneous optimal control is weaker than  Assumption \ref{A2} used in this paper. Unlike the relation between averaged and simultaneous controllability, where the first notion is weaker, here we have the opposite kind of relation: the turnpike property for averaged optimal control is stronger than the one for simultaneous optimal control. This was to be expected since, in our case, we only penalized the average of the observations, contrary to what was done in \cite{HLZ23}.

\medbreak
    
$\bullet$ It is a well-known result that condition $(A,B)$-stabilizability and $(A,C)$-detectability are related to the exponential stability of the operators $A^* + B^* K_B^*$ and $A+K_C C$, respectively. In this sense, for parameter-dependent systems, hypotheses such as the following ones give us an exponential decay.
\begin{assumption}\label{complementary_assump1}
    There exists a
feedback operator $K_C\in C^0(\Omega,\mathcal{L}(\R^n))$ and $K_B \in C^0(\Omega, \mathcal{L} ( \R^m ; \R^n))$ uniformly bounded with respect to $\omega\in \Omega$, and a positive constants $\alpha_C,\alpha_B>0$ such that
\begin{align}\label{eq:ineq_1_extra_assum}
    (Av+ K_C Cv,\E[v])_{L^2(\Omega;\R^n)}\geq \alpha_C\|\E[v]\|_{\R^n}^2,\quad \text{for every }v\in L^2(\Omega;\R^n),
\end{align}
and 
\begin{align*}
    (A^*v+ K_B B^*v,\E[v])_{L^2(\Omega;\R^n)}\geq \alpha_B\|\E[v]\|_{\R^n}^2,\quad \text{for every }v\in L^2(\Omega;\R^n).
\end{align*}
\end{assumption}
Assumption \ref{complementary_assump1} is particularly interesting since inequality \eqref{eq:ineq_1_extra_assum} ensures that the average solution of system 
\begin{align*}
\begin{cases}
x_t(t,\omega)+A(\omega)x(t,\omega)+K_C(\omega)C(\omega)x(t,\omega)=0,\quad t>0,\\
    x(0,\omega)=x_0(\cdot)\in L^2(\Omega;\R^n)
\end{cases}
\end{align*}
exponential decay, that is,
\begin{align*}
    \|\E[x(t)]\|^2_{\R^n}\leq e^{-\alpha_C t}\|\E[x_0]\|^2_{\R^n}
\end{align*}
Thus, an interesting future research could be the analysis of the turnpike property, both integral and exponential, under the previous assumptions. Observe that these assumptions are weaker than uniformly detectability and stability. 

\medbreak

$\bullet$ Let us observe that, Assumption \ref{A1} is stronger than Assumption \ref{A0}. To illustrate this, consider the following $1-$dimensional example. Let us consider the discrete probability space $(\Omega,\mathcal{F},\mathbb{P})$ with $\Omega=\{\omega_1,\omega_2\}$ and $\mathbb{P}$ the Bernoulli distribution with parameter $p=1/2$. Then, we define random variables
\begin{align*}
    A(\omega)=\begin{cases}1, &\text{if }\omega=\omega_1,\\
    -1 &\text{if }\omega=\omega_2,        
    \end{cases}\quad\text{and}\quad C(\omega)=\begin{cases}0 &\text{if }\omega=\omega_1,\\
    -1 &\text{if }\omega=\omega_2.        
    \end{cases}
\end{align*}
Taking $B=1$, we can introduce the optimal control \eqref{eq:evol_funtional}-\eqref{eq:evol_equation}, obtaining an optimal pair $(x(\omega),u)$. Observe that if $\omega=\omega_1$, we define a system that only depends on $\omega_1$. Similar to $\omega=\omega_2$. Then, we have that $\mathbb{P}[C(\omega)x(\omega)=C(1)x(2)]=\mathbb{P}[C(\omega)x(\omega)=C(2)x(1)]=0$. 

Now, consider a random variable $K_C$, and observe that
\begin{align*}
    (Av+K_CCv,v)_{L^2(\Omega;\R)}&= \frac{1}{2}\big((A(\omega_1)v(\omega_1)+K_C(\omega_1)C(\omega_1)v(\omega_1))v(\omega_1)\\
    &\hspace{2cm}+(A(\omega_2)v(\omega_2)+K_C(\omega_2)C(\omega_2)v(\omega_2))v(\omega_2)\big)\\
    &=\frac{1}{2}\big(|v(\omega_1)|^2 -|v(\omega_2)|^2 -K_C(\omega_2)|v(\omega_2)|^2\big).
\end{align*}
Therefore, taking $K_C(\omega)$ such that $K_C(\omega_1)=0$ and $K_C(\omega_2)=-2$ we have
\begin{align*}
    (Av+K_CCv,v)_{L^2(\Omega;\R)}\geq \frac{1}{2}(|v(\omega_1)|^2+|v(\omega_2)|^2)=\|v\|_{L^2(\Omega;\R)},
\end{align*}
and Assumption \ref{A0} holds. 

On the other hand, observe that 
\begin{align*}
     (Av+K_C&\E[K_C Cv],v)_{L^2(\Omega;\R)}\\
     &=\frac{1}{2}\left(A(\omega_1)v(\omega_1)+\frac{K_C(\omega_1)}{4}\left(K_C(\omega_1)C(\omega_1)v(\omega_1)+K_C(\omega_2)C(\omega_2)v(\omega_2)\right)\right)v(\omega_1)\\
    &\hspace{0.5cm}+\frac{1}{2}\left(A(\omega_2)v(\omega_2)+\frac{K_C(\omega_2)}{4}\left(K_C(\omega_1)C(\omega_1)v(\omega_1)+K_C(\omega_2)C(\omega_2)v(\omega_2)\right)\right)v(\omega_2)\\
    &=\frac{1}{2}\big(|v(\omega_1)|^2-\frac{K_C(\omega_1)}{4}K_C(\omega_2)v(\omega_2)v(\omega_1)-|v(\omega_2)|^2-\frac{|K_C(\omega_2)|^2}{4}|v(\omega_2)|^2.
\end{align*}
Then, we can take $K_C(\omega_1)=0$ and recover the $|v(\omega_2)|$ in order to have the $\|v\|_{L^2(\Omega;\R)}$ norm. However, there is no real value for $K_C(\omega_2)$ such that we can obtain the $|v(\omega_2)|$ with a positive sign. Consequently, Assumption \ref{A1} does not hold.


\section*{Acknowledgments}

M. Hernandez has been funded by the Transregio 154 Project, “Mathematical Modelling, Simulation, and Optimization Using the Example of Gas Networks” of the DFG, project C07, and the fellowship "ANID-DAAD bilateral agreement". M. Lazar was supported in part by the Croatian Science Foundation under the project Conduction, IP-2022-10-5191, as well as by the German Federal Ministry of Education and Research (BMBF)
and the Croatian Ministry of Science and Education under the DAAD grant 57654073; Uncertain Data in Control of PDE Systems. S. Zamorano was supported by the Alexander von Humboldt Foundation by the Research Fellowship for Experienced Researchers. 

Part of this research was carried out while the third author visited the FAU DCN-AvH Chair for Dynamics, Control, Machine Learning and Numerics at the Department of Mathematics at FAU, Friedrich-Alexander-Universit\"{a}t Erlangen-N\"{u}rnberg (Germany). He would like to thank the members of this institution for their kindness and warm hospitality. 

%

\end{document}